\setlist[description]{leftmargin=\parindent, labelindent=\parindent}
\setlist[itemize]{nosep}
\DeclareMathAlphabet{\mathpzc}{OT1}{pzc}{m}{it}
\newcommand{\bF}{\mathbb{F}}
\newcommand{\bz}{\mathbb{Z}}
\newcommand{\br}{\mathbb{R}}
\newcommand{\al}{\alpha}
\newcommand{\be}{\beta}
\newcommand{\ga}{\gamma}
\renewcommand{\Im}{\mathrm{Im}}
\newcommand{\Ozsvath}{Ozsv{\'a}th\xspace}
\newcommand{\Szabo}{Szab{\'o}\xspace}
\DeclareMathOperator{\Id}{Id}
\def\co{\colon\thinspace}
\DeclareMathOperator{\CKh}{CKh}
\DeclareMathOperator{\Kh}{Kh}
\DeclareMathOperator{\Sz}{Sz}
\DeclareMathOperator{\CSz}{CSz}
\newcommand{\rCSz}{\widetilde{\CSz}}
\newcommand{\rSz}{\widetilde{\Sz}}
\newcommand{\HF}{\widehat{\mathrm{HF}}}
\DeclareMathOperator{\diagram}{\mathcal{D}}
\newcommand{\handle}[1][]{\mathfrak{h}_{ #1 }}
\DeclareMathOperator{\cone}{cone}
\DeclareMathOperator{\cob}{\mathpzc{Cob}}
\DeclareMathOperator{\matcob}{\mathpzc{Mat}(\cob)}
\newcommand{\skft}{\mathcal{K}}
\newcommand{\BN}[1]{\llbracket #1 \rrbracket}
\newcommand{\rskft}{\widetilde{\skft}}
\newcommand{\kft}{\mathcal{A}}
\newcommand{\rkft}{\tilde{\kft}}
\newcommand{\arrowunder}[1]{\mathrel{\mathop{\longrightarrow}_{#1}}}
\DeclareMathOperator{\Kom}{\mathpzc{Kom}}
\newtheorem{Thm}{Theorem}[section]
\newtheorem*{Thm*}{Theorem}
\newtheorem{Prop}[Thm]{Proposition}
\newtheorem*{Prop*}{Proposition}
\newtheorem{Lem}[Thm]{Lemma}
\newtheorem*{Lem*}{Lemma}
\newtheorem{Cor}[Thm]{Corollary}
\newtheorem*{Conj}{Conjecture}
\theoremstyle{definition}
\newtheorem{Def}[Thm]{Definition}
\newtheorem*{Def*}{Definition}
\theoremstyle{remark}
\newtheorem{Rem}[Thm]{Remark}
\newtheorem{Rem*}{Remark}
\newcounter{Eg}
\newtheorem*{Eg*}{Example}
\newsavebox{\fminipagebox}
\NewDocumentEnvironment{fminipage}{m O{\fboxsep}}
 {\par\kern#2\noindent\begin{lrbox}{\fminipagebox}
  \begin{minipage}{#1}\ignorespaces}
 {\end{minipage}\end{lrbox}%
  \makebox[#1]{%
    \kern\dimexpr-\fboxsep-\fboxrule\relax
    \fbox{\usebox{\fminipagebox}}%
    \kern\dimexpr-\fboxsep-\fboxrule\relax
  }\par\kern#2
 }
\begin{document}
\title[Khovanov-Floer theories and mutation]{Mutation-invariance of Khovanov-Floer theories}
\author{Adam Saltz}
\address{University of Georgia}
\email{adam.saltz@uga.edu}
\date{\today}
\thanks{This material is based upon work supported by the National Science
  Foundation under Grant No. 1664567.}
\begin{abstract}
  Khovanov-Floer theories are a class of homological link invariants which admit
  spectral sequences from Khovanov homology.  They include Khovanov homology,
  \Szabo's geometric link homology, singular instanton homology, and various
  Floer theories applied to branched double covers.  In this short note we show
  that certain \emph{strong} Khovanov-Floer theories, including \Szabo homology
  and singular instanton homology, are invariant under Conway mutation.  Along
  the way we prove two other conjectures about the structure of \Szabo homology.  
\end{abstract}
\maketitle

\section{Introduction}\label{sec:background}

Let $L \subset S^3$ be a link.  Suppose that there is a sphere in $S^3$ which
intersects $L$ transversely in four points, splitting the link into two tangles,
each in a three-ball.  \emph{Mutation} is the operation of regluing the
three-balls by some orientation-preserving involution of the sphere.  Mutation can also be
represented diagrammatically: if $\diagram$ is a link diagram and $s$ is a
circle which intersects $\diagram$ transversely in four points, then mutation is
the operation of rotating the tangle inside $s$ by $180^\circ$.  If $L'$ is the
result of mutation on $L$, then we say that $L$ and $L'$ are \emph{mutants}.

\begin{figure}[h]
  \centering
  \includegraphics[scale=.5]{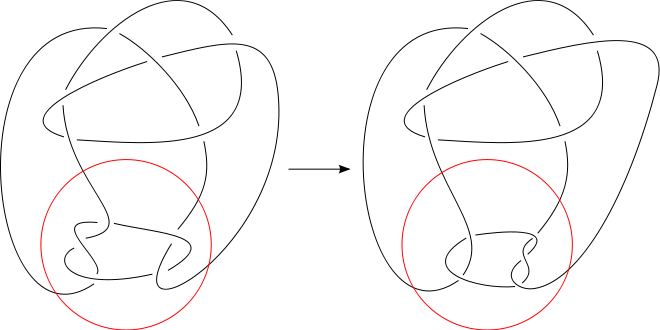} 
  \caption{On the left, the Kinoshita-Terasaka knot (11n42). On the right, the
    Conway knot (11n34). The two are related by mutation along the Conway sphere
    shown in red. \cite{Jorgenson2010} }
\end{figure}

Perhaps surprisingly, many link invariants cannot detect mutation: if $L$
and $L'$ are mutants, then the invariants agree on $L$ and $L'$.  These include
the signature and HOMFLY-PT polynomial (and therefore the Jones and Alexander
polynomials).
Write $\Sigma(L)$ for the double cover of $S^3$ branched along $L$.  Viro
showed that $\Sigma(L)$ and $\Sigma(L')$ are diffeomorphic \cite{Viro1979}.
Therefore any invariant which is also an invariant of branched double covers
cannot detect mutation.  On the
other hand, genus and knot Floer homology can distinguish mutants.  So the ability
to detect mutants is an important measure of the power of a link invariant.

In this note we prove that a certain class of link invariants called \emph{Khovanov-Floer theories} do not detect
mutation.  The definition of a Khovanov-Floer theory was introduced by Baldwin,
Hedden, and Lobb~\cite{BaldwinHeddenLobb2015} 
to encompass the many link homology theories which admit spectral sequences from
Khovanov homology.  These include singular instanton homology, \Szabo's
``geometric'' link homology, and various Floer homologies applied to
$\Sigma(-L)$.  Viro's theorem implies that the latter are all
mutation-invariant, and Khovanov homology (with coefficients in $\bz/2\bz$) is itself is mutation-invariant, see
~\cite{Wehrli2010},~\cite{Bloom2011}, and~\cite{2017arXiv170100880L}.   So it's natural to ask if the other
Khovanov-Floer theories are mutation-invariant as well.  All the theories in this paper have coefficients in $\bF = \bz/2\bz$.
\begin{Thm}\label{thm:main}
  Let $\skft$ be a conic, strong Khovanov-Floer theory over the ring $\bF[X]/(X^2)$.
  Let $\diagram$ and $\diagram'$ be mutants.  Then $H(\skft(\diagram)) \cong
  H(\skft(\diagram'))$.
\end{Thm}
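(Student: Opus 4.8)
The plan is to reduce mutation-invariance of a strong Khovanov-Floer theory to the already-known mutation-invariance of Khovanov homology over $\bF = \bz/2\bz$, by exploiting the structure of the spectral sequence that a Khovanov-Floer theory provides.  The defining feature of a Khovanov-Floer theory $\skft$ is that for each diagram $\diagram$ it produces a filtered complex $\skft(\diagram)$ whose associated graded is the Khovanov complex $\CKh(\diagram)$, and whose induced spectral sequence computes $H(\skft(\diagram))$ from $\Kh(\diagram)$.  The word \emph{strong} (and \emph{conic}) should be encoding extra functoriality: enough control over how $\skft$ responds to the elementary cobordisms and saddle moves appearing in a planar-diagram presentation of a tangle.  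So the first step is to recall precisely what a strong, conic Khovanov-Floer theory assigns to a tangle or to the ``cube of resolutions,'' and to recall the cobordism maps it induces, since these are the raw materials of the argument.

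The core of the argument should follow the strategy used for Khovanov homology itself (Wehrli, Bloom, Lizuka).  Over $\bF$, mutation along a Conway sphere can be realized as a composition of diagram moves, and the crucial observation is that the effect of a $180^\circ$ rotation of a tangle is captured by a symmetry of the cube of resolutions: mutation permutes the resolutions of the tangle inside the sphere in a way that is compatible with the differential up to the coefficient $\bF$ (where signs, which would otherwise obstruct the symmetry, vanish).  Concretely, I would build an explicit chain isomorphism, or at least a filtered quasi-isomorphism, between $\skft(\diagram)$ and $\skft(\diagram')$ by assembling the tangle-level data: decompose each diagram along the Conway sphere into an ``outside'' tangle and an ``inside'' tangle, apply the strong-Khovanov-Floer cobordism/tangle maps to each piece, and show that the mutation symmetry of the inside tangle intertwines these maps.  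The strong and conic hypotheses are exactly what let me lift the known chain-level symmetry of $\CKh$ to a filtered map on $\skft$, because they guarantee that $\skft$ is determined (up to filtered homotopy equivalence) by local data that mutation preserves.

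The main obstacle I expect is \emph{filtered} control rather than the existence of a map on the associated graded.  Over $\bF$ the identification on $\CKh$-level (the associated graded) is comparatively easy; what is hard is promoting this to a morphism of filtered complexes that induces an isomorphism on the full homology $H(\skft)$, since a priori a map inducing an isomorphism on the $E_1$-page of a spectral sequence need not induce an isomorphism on the abutment unless it is a genuine morphism of filtered complexes (equivalently a morphism of spectral sequences).  This is where the structural hypotheses must do the heavy lifting: I would use the strong condition to produce the mutation map as a bona fide filtered chain map (not merely a map of associated gradeds), and then invoke the standard spectral-sequence comparison theorem — a filtered map that is a quasi-isomorphism on $E_1 = \Kh$ is a filtered quasi-isomorphism — to conclude $H(\skft(\diagram)) \cong H(\skft(\diagram'))$.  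The conic hypothesis I expect is used to handle the tangle-composition/gluing step cleanly, ensuring that cones of saddle maps behave functorially under the mutation symmetry.  A secondary technical point to watch is that the two conventional forms of mutation (rotation about the three coordinate axes of the Conway sphere) must all be covered; I would either reduce all cases to a single generating rotation or check that the local symmetry argument applies verbatim to each.
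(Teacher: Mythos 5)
Your plan founders at exactly the step you flag as ``the main obstacle,'' and the way you propose to get past it is not available. You want to lift the known chain-level mutation symmetry of $\CKh$ over $\bF$ (Wehrli/Bloom) to a filtered chain map $\skft(\diagram) \to \skft(\diagram')$ and then apply the spectral-sequence comparison theorem. The comparison step itself is fine (the filtrations here are bounded), but the lift is the whole problem: the axioms of a conic, strong Khovanov-Floer theory give functoriality only for cobordism/handle-attachment maps, and mutation is not a cobordism. Rotating the inner tangle permutes the vertices of the cube of resolutions compatibly with the Khovanov ($E_1$-level) differential over $\bF$, but the extra terms in the filtered differential --- e.g.\ \Szabo's maps along diagonals of the cube --- count configurations that span the Conway sphere and are not local to the two tangles, so there is no a priori reason the rotation is a chain map for $\skft$ at all. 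The paper is explicit that this route is blocked: Wehrli's argument ``uses more structure than Proposition \ref{prop:mainTech} provides,'' and the missing ingredient is identified precisely in the remark after Corollary \ref{cor:basepointInvariance} --- one would need $X_p \simeq X_q$ for basepoints on opposite sides of a crossing, with homotopy the backwards saddle map $\handle[c']$, and this is not known for strong Khovanov-Floer theories. Indeed, if your lift existed one would get invariance of the filtered chain homotopy type, which the paper states as an open upgrade of Theorem \ref{thm:main}, not something the hypotheses are known to deliver.

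The paper's actual route avoids constructing any map between the mutant complexes. It shows that a conic, strong Khovanov-Floer theory over $\bF[X]/(X^2)$ is an \emph{extended} Khovanov-Floer theory in Lambert-Cole's sense: the basepoint action $X_p$ defines a reduced theory $\rskft_p = \Im(X_p)$; basepoint independence $\rskft_p \simeq \rskft_q$ and the splitting $\skft \cong \rskft \oplus \rskft$ are proved by transporting Shumakovitch's $\nu$-map argument into Bar-Natan's formal bracket $\BN{-}$ using the 4Tu relation (Proposition \ref{prop:invariance}); a K\"{u}nneth formula for connected sums of reduced theories follows similarly (Proposition \ref{prop:connectedSums}); and the skein-triangle requirement is exactly conicity. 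Mutation invariance then comes from Lambert-Cole's theorem, whose proof works at the level of homology via skein triangles and connected-sum decompositions. So your proposal is not a variant of the paper's argument; it is the alternative (Wehrli-style) strategy that the paper considered and could not carry out, and as written it has a genuine gap where the filtered mutation map is asserted to exist.
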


In \cite{Saltz2017a} we defined strong Khovanov-Floer theories and showed that \Szabo
homology and singular instanton homology count among them. 

\begin{Cor}\label{cor:main}
  \Szabo homology and singular instanton homology are mutation-invariant.
\end{Cor}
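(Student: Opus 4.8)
The plan is to obtain the Corollary as a direct application of Theorem~\ref{thm:main}. Because that theorem asserts mutation-invariance for \emph{every} conic, strong Khovanov-Floer theory over $\bF[X]/(X^2)$, all that remains is to confirm that \Szabo homology and singular instanton homology each meet its hypotheses. Concretely, for each of the two theories I would check four things: that it is a Khovanov-Floer theory in the sense of \cite{BaldwinHeddenLobb2015}, that it is \emph{strong}, that it is \emph{conic}, and that its underlying Frobenius algebra is $\bF[X]/(X^2)$.

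Three of these four points are already in hand. That \Szabo homology and singular instanton homology are strong Khovanov-Floer theories is exactly the content of \cite{Saltz2017a}, which disposes of the first two conditions simultaneously. The coefficient ring is essentially forced by the Khovanov-Floer structure itself: the $E_1$ page of each theory is, by definition, the Khovanov complex of the diagram, whose generators are labelled by the two-dimensional Frobenius algebra $\bF[X]/(X^2)$. Hence each theory is naturally a theory over $\bF[X]/(X^2)$, and no further work is needed here.

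The one genuinely substantive point is conicity, and this is where I would spend the effort. For \Szabo homology the situation is favorable: its differential is defined combinatorially on the cube of resolutions by an explicit count of local configurations, so one can read off the decomposition of the edge and face maps and verify the conic condition by direct inspection, resolution by resolution. The harder case, and the step I expect to be the main obstacle, is singular instanton homology: there the relevant maps arise from Floer-theoretic cobordism maps rather than from a combinatorial rule, so conicity cannot simply be checked by hand. Instead I would try to deduce it from the naturality and functoriality already packaged into the proof that it is a strong Khovanov-Floer theory, reducing the verification to the behavior of the cone decomposition under the elementary cobordisms generating the cube. Once conicity is established for both examples, Theorem~\ref{thm:main} applies verbatim and yields the isomorphism $H(\skft(\diagram)) \cong H(\skft(\diagram'))$ for mutant diagrams $\diagram$ and $\diagram'$, completing the proof.
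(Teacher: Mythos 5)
Your skeleton is the paper's: Corollary \ref{cor:main} follows by applying Theorem \ref{thm:main} once both theories are known to be conic, strong Khovanov-Floer theories over $\bF[X]/(X^2)$, and the paper discharges all of those hypotheses with a single citation to \cite{Saltz2017a}. However, two of your verifications have genuine problems. The coefficient-ring argument is fallacious: by condition 5 of Definition \ref{def:skft}, being ``over $\bF[X]/(X^2)$'' is a statement about the Frobenius algebra that the theory's \emph{own} handle-attachment maps induce on $\skft(U)$ for a crossingless unknot $U$, not about the $E_1$ page of the associated spectral sequence. Every Khovanov-Floer theory has Khovanov homology as its $E_1$ page, yet Bar-Natan homology --- listed in Section \ref{sec:skfts} as a strong Khovanov-Floer theory --- is over the deformed algebra $\bF[X]/(X^2+X)$ rather than $\bF[X]/(X^2)$. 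Your inference would apply verbatim to it and reach a false conclusion, so it cannot be valid. The actual justification is computational and is carried out in \cite{Saltz2017a}: Szab\'o's extra differentials vanish on crossingless diagrams, so $\CSz(U)$ with its handle maps is literally Khovanov's Frobenius algebra, and for singular instanton homology the identification comes from Kronheimer--Mrowka's computation of the cobordism maps for the unknot. The distinction is not pedantic: the reduced-theory machinery behind Theorem \ref{thm:main} (e.g.\ the vanishing of cobordisms carrying two dots on one component) genuinely uses $X^2=0$.

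Second, you leave conicity of singular instanton homology as an unresolved ``main obstacle,'' offering only a hope of deducing it from naturality; as written, that is a hole in the proof. But this is not an open problem: conicity of singular instanton homology follows from Kronheimer--Mrowka's unoriented skein exact triangle, which exhibits the complex of a diagram as the mapping cone of the one-handle cobordism map between its two resolutions, and this verification is part of what \cite{Saltz2017a} establishes --- it is precisely what the paper is invoking when it states the corollary without further proof. With that citation in place of your two problematic checks, your argument coincides with the paper's.
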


The proof of the theorem uses only the structure of strong Khovanov-Floer theories and makes no
reference to, say, instantons.  The corollary was conjectured by Peter
Lambert-Cole in~\cite{2017arXiv170100880L} (and earlier by Seed, see below).  He defined a particular class of
Khovanov-Floer theories, so-called \emph{extended} theories, characterized by
the fact that their reduced versions satisfy a K\"{u}nneth theorem for connected sums.

\begin{Thm*}[Lambert-Cole, \cite{2017arXiv170100880L}]
  Extended Khovanov-Floer theories are mutation-invariant.
\end{Thm*}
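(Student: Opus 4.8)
The plan is to reduce the unreduced statement to the K\"unneth property that defines extended theories, by building a connected-sum model for the link near the Conway sphere and then transferring from the reduced theory to the unreduced one.

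First I would put the mutation into a normal form. Up to ambient isotopy I may assume the Conway sphere is standardly embedded, the four marked points are symmetric, and mutation is rotation by $\pi$ about a fixed axis; since the three mutation involutions generate a finite group and differ from one another by isotopies of $S^3$ together with a mutation already in hand, it suffices to treat a single involution, and I would choose the one that interchanges the two strands of the tangle $T$ inside the sphere. I would also arrange the diagram $\diagram$ so that near the Conway circle the strands run parallel, so that a basepoint $p$ can be placed on one of the four strands meeting the sphere.

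Next comes the heart of the argument: converting the tangle rotation into a symmetry of a tensor product. Cutting $\rskft(\diagram)$ along the Conway circle and passing to a partial closure based at $p$, I would identify the reduced invariant with a tensor product of tangle contributions in which the two strands interchanged by the rotation correspond to the two tensor factors of a genuine connected sum, where the K\"unneth theorem for $\rskft$ applies. Because the K\"unneth isomorphism is symmetric---tensor product over $\bF$ is commutative---the rotation that swaps these factors induces an isomorphism on the total reduced homology, giving $H(\rskft(\diagram)) \cong H(\rskft(\diagram'))$, and I would check that this isomorphism is natural with respect to the cobordism maps of the theory.

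Finally I would upgrade from reduced to unreduced. Over $\bF[X]/(X^2)$ the reduced and unreduced theories sit in an exact triangle whose connecting map is the basepoint $X$-action, and this triangle is natural for the cobordism maps built into a Khovanov-Floer theory. Since $p$ lies on a strand that mutation preserves up to isotopy, the reduced mutation isomorphism commutes with the $X$-action, and the five-lemma applied to the long exact sequences of $\diagram$ and $\diagram'$ yields $H(\skft(\diagram)) \cong H(\skft(\diagram'))$. The main obstacle is the middle step: mutation is genuinely a tangle operation rather than a connected sum, so the K\"unneth theorem does not apply directly. The real work is to produce the correct connected-sum model near the Conway sphere---choosing the basepoint and partial closure so that the two interchanged strands are exactly the two K\"unneth factors---and to verify that the resulting identification is natural enough, compatible with the cobordism maps and with the rotation, to survive the passage to the unreduced theory. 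Essentially all the content of the theorem lives in establishing this naturality.
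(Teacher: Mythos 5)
This statement is not proved in the paper at all: it is imported from Lambert-Cole \cite{2017arXiv170100880L} and used as a black box. The paper's contribution (Theorem \ref{thm:main}) is to verify that conic strong Khovanov-Floer theories over $\bF[X]/(X^2)$ satisfy the three axioms of Definition \ref{def:ekft}, so that Lambert-Cole's theorem applies. Lambert-Cole's own argument runs by induction on the number of crossings inside the Conway sphere: the unoriented skein exact triangle (axiom 2) resolves a crossing compatibly on both members of the mutant pair, and the base case of crossingless tangles is handled by planar isotopy together with basepoint independence (axiom 3) and the identification $\kft(L) = \rkft(L \cup \circ, p)$ (axiom 1). Your proposal uses none of this machinery, so it is a genuinely different route --- and unfortunately one with a fatal gap.

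The gap is exactly where you place ``the real work.'' A Conway sphere meets the link in \emph{four} points, while the K\"{u}nneth/connected-sum property of a reduced theory concerns spheres meeting the link in \emph{two} points. There is no ``connected-sum model'' in which the inside and outside tangles become the two tensor factors: that would be a tangle-gluing theorem expressing the invariant of a link as a plain (symmetric) tensor product of tangle invariants, which is not among the axioms of an extended theory and is false in general --- tangle invariants pair as modules over an algebra, not by vector-space tensor product. A concrete sanity check shows no argument of your shape can succeed: $\widehat{\mathrm{HFK}}$ satisfies a K\"{u}nneth formula for connected sums over a field, yet knot Floer homology distinguishes the Kinoshita-Terasaka knot from the Conway knot (the paper itself notes that knot Floer homology detects mutation). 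So ``K\"{u}nneth for connected sums plus commutativity of $\otimes$'' cannot imply mutation invariance; the mod-2 and Khovanov-specific axioms must enter, and in Lambert-Cole's proof they enter through the skein triangle and basepoint independence, neither of which your argument invokes. Your final step has a second, smaller problem: the exact triangle relating reduced and unreduced theories via the basepoint $X$-action is not part of Definition \ref{def:ekft} (an extended theory is a pair of BHL-type theories \cite{BaldwinHeddenLobb2015}; no $X$-action on its pages is supplied). The axioms instead provide the upgrade directly: once the reduced invariants of the mutants agree, apply $\kft(L) = \rkft(L \cup \circ, p)$ with the basepoint on a split unknot and use basepoint independence, with no five-lemma needed.
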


Lambert-Cole conjectures that \Szabo homology and singular instanton homology
are extended Khovanov-Floer theories.  We prove Theorem \ref{thm:main} by proving that every conic, strong
Khovanov-Floer theory over $\bF[X]/(X^2)$ is extended.  The advantage of working with conic, strong Khovanov-Floer theories is that they
``factor through'' Bar-Natan's formal Khovanov bracket; this is the main
technical result of \cite{Saltz2017a}. (See Proposition
\ref{prop:mainTech} in the next section for a precise statement.)  So nearly any
property of Khovanov homology which can be proved via the formal bracket is also a
property of conic, strong Khovanov-Floer theories.  Therefore it suffices to frame the K\"{u}nneth
formula for reduced Khovanov homology in terms of the formal bracket.

There are two other extant proofs that Khovanov homology is mutation-invariant.
Wehrli's was the first~\cite{Wehrli2010}, and it applies only to
component-preserving mutations.  It uses some subtle structure of the formal bracket; in
fact, it uses more structure than Proposition \ref{prop:mainTech} provides.
Unfortunately, we are not yet able to show that strong Khovanov-Floer theories
carry all this structure.  If they do, then one could upgrade Theorem
\ref{thm:main} to a statement about chain homotopy types rather than homology
groups.  See the remark following the proof of Corollary
\ref{cor:basepointInvariance}.  Bloom offered another proof for odd Khovanov homology using techniques
which have been since unexplored, as best we can tell.  It is interesting to
consider their relevance their relevance to other Khovanov-Floer theories.

\subsection{The structure of \Szabo homology}\label{subsec:introSzabo}

\Szabo homology is a link homology theory which interpolates between the
combinatorial and analytic Khovanov-Floer theories.  To a link diagram
$\diagram$ it assigns a filtered chain complex $\CSz(\diagram)$ whose underlying vector
space is identical to the Khovanov chain group.  Its homology, $\Sz(\diagram)$, is a link invariant.  The differential is
equal to the Khovanov differential plus maps along the diagonals of the cube of the
resolutions of $\diagram$.  There is a spectral sequence from $\Kh(\diagram)$ to
$\Sz(\diagram)$ which is formally similar to \Ozsvath-\Szabo's
spectral sequence from $\Kh(\diagram)$ to $\HF(\Sigma(-\diagram))$.  Conjecturally, this
similarity is more than formal.

\begin{Conj}[Szabo~\cite{Szabo2015}, Seed~\cite{Seed2011}]
  $\Sz(\diagram) \cong \HF(\Sigma(-\diagram))$, and the \Ozsvath-\Szabo spectral
  sequence agrees with the Leray spectral sequence from $\Kh(\diagram)$ to $\Sz(\diagram)$.
\end{Conj}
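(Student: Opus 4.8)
The plan is to upgrade the conjecture to a statement about filtered chain complexes: I would construct a filtration-preserving chain isomorphism between Szabo's complex $\CSz(\diagram)$ and a cube-of-resolutions model for $\HF(\Sigma(-\diagram))$, and then read off both halves of the conjecture from this identification. Both invariants live over the cube of resolutions of $\diagram$. At a vertex whose resolution is a $k$-component unlink, the branched double cover is $\#^{k-1}(S^1 \times S^2)$, and the canonical isomorphism $\HF(\#^{k-1}(S^1\times S^2)) \cong \Lambda^* H_1(\#^{k-1}(S^1\times S^2)) \cong V^{\otimes(k-1)}$ matches the reduced Khovanov (hence reduced Szabo) generators at that vertex; the identification is thus most natural at the level of the reduced theories, and the unreduced statement follows by tensoring with the basepoint factor $V$. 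First I would fix this vertex-by-vertex identification and check, via \Ozsvath-\Szabo's original computation, that the edge maps of the Heegaard Floer cube --- the two-handle maps induced by the branched double covers of the saddle cobordisms --- are exactly the merge and split maps of the Khovanov differential. This is the statement that the $E_1$ differential is the Khovanov differential and the $E_2$ page is (reduced) Khovanov homology, and it is already known.

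The combinatorial side is under good control: by Proposition \ref{prop:mainTech}, $\CSz(\diagram)$ factors through Bar-Natan's formal Khovanov bracket, so the diagonal maps of $\CSz(\diagram)$ are determined by Szabo's local list of configuration maps on the active arcs of each face of the cube. The burden therefore falls entirely on producing a comparably explicit model for the higher differentials on the Heegaard Floer side. For this I would present $\Sigma(-\diagram)$ as surgery on a framed link inside $\#(S^1\times S^2)$ and apply a link-surgery, or hyperbox, formula of the kind developed by \Ozsvath-\Szabo and Manolescu--Ozsv{\'a}th (with a monopole-Floer analogue due to Bloom), so that the total complex computing $\HF(\Sigma(-\diagram))$ acquires diagonal maps given by counts of holomorphic polygons in an associated multi-diagram. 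The goal is then to match these polygon counts, face by face, with Szabo's configuration maps, producing a filtration-preserving chain isomorphism $\CSz(\diagram) \cong \big(\text{surgery total complex}\big)$.

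The hard part, and the reason the conjecture is still open, is the explicit evaluation of those holomorphic polygon counts and their identification with Szabo's combinatorial rule. Szabo's maps are \emph{local}: they depend only on the configuration of active arcs on a single face of the cube. The polygon counts, by contrast, are a priori global and depend on analytic choices. To bridge the gap I would attempt a neck-stretching degeneration localizing each higher count to contributions supported near the crossings being resolved, reducing the problem to a finite list of model configurations; alternatively one could decompose $\diagram$ into elementary tangles, assign to the branched double cover of each tangle its bordered Floer invariant, and reassemble by the pairing theorem, so that the diagonal maps become $A_\infty$ operations computable from the elementary pieces. Either route must end by verifying that the resulting local maps obey \emph{exactly} Szabo's recursion --- and this matching of an analytically defined map with a combinatorial formula is precisely the step at which previous attempts have stalled.

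Granting such a filtered isomorphism, the second half of the conjecture is formal: a filtration-preserving isomorphism of complexes induces an isomorphism of the associated spectral sequences from $E_1$ onward, so the Leray spectral sequence from $\Kh(\diagram)$ to $\Sz(\diagram)$ agrees with the \Ozsvath-\Szabo spectral sequence from $\Kh(\diagram)$ to $\HF(\Sigma(-\diagram))$. The final loose end is naturality: one must check that the identification is independent of the diagram up to the Reidemeister moves, which simultaneously reproves that $\Sz(\diagram)$ is a link invariant. I would verify this move-by-move using invariance of the bordered and surgery invariants under the handle slides and stabilizations that realize the Reidemeister moves in the branched double cover.
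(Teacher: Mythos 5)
You have not proved anything, and neither does the paper: the statement you were given is labeled a \emph{Conjecture} precisely because it is open. The paper never claims a proof of it --- it proves Seed's first and second conjectures and mutation-invariance (Theorem \ref{thm:main}), while the identification $\Sz(\diagram) \cong \HF(\Sigma(-\diagram))$ and the comparison of spectral sequences are stated only as motivation, with Seed's computer calculations cited as numerical evidence. So there is no ``paper's own proof'' to compare against, and your proposal must stand on its own as a proof. It does not: by your own admission, the central step --- showing that the holomorphic polygon counts arising in a link-surgery/hyperbox presentation of $\HF(\Sigma(-\diagram))$ agree, face by face, with \Szabo's combinatorial configuration maps --- is ``precisely the step at which previous attempts have stalled,'' and you only say you would ``attempt'' a neck-stretching degeneration or a bordered reassembly. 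A proof outline whose pivotal lemma is flagged as unresolved is a research program, not a proof, and everything downstream (the filtered isomorphism, the identification of spectral sequences from $E_1$ onward, naturality under Reidemeister moves) is conditional on it.

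Two further points of caution about the outline itself. First, your appeal to Proposition \ref{prop:mainTech} is misplaced: factoring through Bar-Natan's bracket constrains how $\CSz$ is assembled from the cube and its edge cobordisms, but the locality of \Szabo's diagonal maps is simply part of \Szabo's definition, not a consequence of that proposition; more importantly, the proposition gives you no leverage at all on the analytic side, which is where the difficulty lives. Second, even granting a vertex-by-vertex identification and agreement of edge maps (which is indeed known, being the content of \Ozsvath--\Szabo's original spectral sequence), a filtered quasi-isomorphism is not automatic from page-by-page agreement of differentials on one diagram: the higher differentials in both theories depend on choices (analytic data on one side, and the specific chain-level model on the other), and the comparison must be made compatibly with those choices and with the transitive-system structure. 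Naming the known tools (Manolescu--\Ozsvath, Bloom, bordered Floer theory) does not substitute for carrying out the matching; until that lemma is established, the conjecture remains exactly as open as the paper leaves it.
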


Seed used a computer program to provide numerical evidence for the conjecture.
Drawing on these computations and the spirit of the conjecture above, he made several
other conjectures about $\Sz$.

\begin{Conj}[\cite{Seed2011}]\leavevmode
  \begin{enumerate}
    \item Write $\rCSz(\diagram,p)$ for the \Szabo homology of
      $\diagram$ \emph{reduced at $p$}.  Then $\rCSz(\diagram,p) \cong \rCSz(\diagram,p')$ for any two points $p$, $p'$
      which are not double-points of $\diagram$.
    \item (``Twin arrows'') Let $E^k$ be the Leray spectral sequence from the homological
      filtration on $\CSz(\diagram)$.  Then
      \[
        E^k(\diagram) \cong \tilde{E}^k(L)\{-1\} \oplus \tilde{E}^k(L)\{1\}
      \]
      where $\tilde{E}^k$ denotes the spectral sequence on $\rCSz(\diagram)$ and
      $\{\pm 1\}$ denotes a shift in the quantum grading.
    \item Let $K$ be a knot.  Then $E^k(K)$ is invariant under mutation for $k
      \geq 2$.
    \item \Szabo homology is isomorphic to mirror \Szabo homology.
  \end{enumerate}
\end{Conj}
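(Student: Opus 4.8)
The plan is to reduce each part of the conjecture to a statement about Bar-Natan's formal Khovanov bracket and then transport it to an arbitrary conic, strong Khovanov-Floer theory $\skft$ by means of Proposition \ref{prop:mainTech}. Since the bracket $\BN{\diagram}$ is a complex in the cobordism category, every assertion becomes a claim about cobordism maps that can be checked by local, diagrammatic arguments, after which applying the functor attached to $\skft$ yields the corresponding statement for $\CSz(\diagram)$. Specializing $\skft$ to \Szabo homology then gives the conjecture.

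For part (1), I would model the reduced complex at a basepoint $p$ as the bracket decorated by a dot at $p$ --- equivalently, as the mapping cone of the basepoint endomorphism $X$ --- and show that sliding $p$ to another non-double-point $p'$ along $\diagram$ is induced by a cobordism that is invertible up to homotopy. This produces a homotopy equivalence of decorated brackets, and Proposition \ref{prop:mainTech} upgrades it to the isomorphism $\rCSz(\diagram,p) \cong \rCSz(\diagram,p')$. For part (2), the key input is that over $\bF = \bz/2\bz$ the short exact sequence $0 \to \rCSz(\diagram)\{1\} \to \CSz(\diagram) \to \rCSz(\diagram)\{-1\} \to 0$ determined by $X$ admits a splitting; following a Shumakovitch-type argument, I would construct this splitting as an explicit filtration-preserving map already inside the formal bracket. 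Because the splitting is filtered it yields a corresponding splitting on every page, and Proposition \ref{prop:mainTech} then produces the twin-arrows decomposition $E^k(\diagram) \cong \tilde E^k(L)\{-1\} \oplus \tilde E^k(L)\{1\}$.

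Part (3) follows by combining part (2) with Theorem \ref{thm:main}: the proof of that theorem exhibits a conic, strong Khovanov-Floer theory as extended in the sense of Lambert-Cole \cite{2017arXiv170100880L}, so the reduced pages $\tilde E^k$ are mutation-invariant for $k \geq 2$, and the splitting of part (2) transfers this invariance to the unreduced pages $E^k$. The restriction to $k \geq 2$ is forced, since mutation changes the cube of resolutions and so need not preserve $E^1$. For part (4), I would observe that mirroring reverses every crossing, so $\BN{\overline{\diagram}}$ is the dual of $\BN{\diagram}$ in the cobordism category; working over the field $\bF$ the resulting homology is the dual of $\Sz(\diagram)$, and since the Frobenius algebra $\bF[X]/(X^2)$ is symmetric this dual is isomorphic to $\Sz(\diagram)$ up to the expected reversal of gradings, giving $\Sz(\diagram) \cong \Sz(\overline{\diagram})$.

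The main obstacle is control of the spectral sequence rather than of homology. Proposition \ref{prop:mainTech} most naturally transports homology-level, that is $E^\infty$, information, whereas parts (2) and (3) require statements on every finite page. The crux is therefore to realize each construction --- the basepoint slide, the Shumakovitch-type splitting, and the mirror duality --- as genuinely filtered maps at the level of the formal bracket, so that the functor attached to $\skft$ sends them to filtered maps and hence to isomorphisms of each $E^k$. Verifying that the splitting is filtered and that it survives passage through the bracket is where I expect the real difficulty to lie.
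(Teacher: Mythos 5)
Your parts (2) and (3) track the paper's actual argument: the paper builds the Shumakovitch-type map $N$ directly on $\BN{\diagram}$ (inductively over the cube via conicity, using the 4Tu relation to commute $N$ past the last handle attachment), sets $\nu = \skft(N)$, obtains the filtered splitting $\skft(\diagram) \cong \rskft(\diagram) \oplus \rskft(\diagram)$, and then twin arrows follows because Leray spectral sequences respect direct sums and $\nu$ has quantum grading $2$; part (3) is Theorem \ref{thm:main} via Lambert-Cole's theorem. Your part (1), however, takes exactly the route the paper flags as unavailable. There is no ``sliding'' cobordism that moves a basepoint past a crossing: if $p$ and $p'$ sit on opposite sides of a crossing, then in some resolutions they lie on different circles, so $X_p$ and $X_{p'}$ genuinely differ as maps on $\BN{\diagram}$, and the chain homotopy between them is the backwards map $\handle[c'] \co \BN{\diagram_1} \to \BN{\diagram_0}$ --- which, as the Remark after Corollary \ref{cor:basepointInvariance} stresses, is precisely the structure that Proposition \ref{prop:mainTech} does \emph{not} provide for strong Khovanov-Floer theories (this is why Wehrli's proof cannot yet be adapted). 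The paper's proof of part (1) instead re-uses $\nu$: a dotted-cobordism/4Tu computation gives $\chi_p N \chi_q N \chi_p = \chi_p$ and symmetrically, so $X_q\nu$ restricts to a homotopy equivalence $\Im(X_p) \to \Im(X_q)$ with inverse $X_p\nu$. Two further points: the reduced theory is $\Im(X_p)$ (Definition \ref{def:reduced}), not the mapping cone of $X_p$ --- over $\bF$ that cone is homotopy equivalent to \emph{two} (shifted) copies of the reduced complex; and for the conjecture as literally stated you still need the argument of Corollary \ref{cor:basepointInvariance} identifying \Szabo's own reduced complex (the $v_-$ subcomplex) with $\Im(X_p)$, which the paper carries out using the filtration rule and an induction on crossings. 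Proposition \ref{prop:mainTech} alone does not hand you that identification.

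Your part (4) also has a genuine gap --- and note that the paper proves only parts (1)--(3) and explicitly says it has nothing to contribute to part (4). It is true that $\BN{\overline{\diagram}}$ is dual to $\BN{\diagram}$ in the cobordism category, but to conclude anything about $\Sz$ you must push this through the functor $F_{\CSz}$ of Proposition \ref{prop:mainTech}, and nothing established in the paper (or in \Szabo's axioms as used here) guarantees that this functor commutes with duality, i.e.\ that the map \Szabo assigns to a reversed configuration is the dual of the map assigned to the original. Your step ``the resulting homology is the dual of $\Sz(\diagram)$'' assumes exactly this compatibility, which is essentially the content of the open conjecture rather than a formal consequence of working over the field $\bF$.
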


We prove the first and second of Seed's conjectures in the course of proving Theorem
\ref{thm:main}, and of course the third is the main subject of this
paper.\footnote{The first conjectured property is part of Lambert-Cole's definition
of an extended Khovanov-Floer theory and therefore plays a central rule in
proving the third conjecture.  But Seed's conjecture predates Lambert-Cole's
argument by six years -- he likely did not know that they were related in this
way!}  We do not have anything original to say about the fourth conjecture and
include it only for completeness.

\section*{Acknowledgements}

I am grateful to Peter Lambert-Cole for several helpful discussions.

\section{Strong Khovanov-Floer theories}\label{sec:skfts}

The following definition is from \cite{Saltz2017a}, following~\cite{BaldwinHeddenLobb2015}.

\begin{Def}\label{def:skft}
  A \emph{strong Khovanov-Floer theory} is a rule which assigns to a link
  diagram $\diagram$ and some auxiliary data $A$ a filtered chain complex
  $\skft(\diagram,A)$ so that
  \begin{enumerate}
    \item For any two collections of auxiliary data $A_\al$, $A_\be$ there is a
      filtered chain homotopy equivalence
      \[
        a_{\al}^{\be} \co \skft(\diagram,A_\al) \to \skft(\diagram, A_\be)
      \]
      so that the collection of all such complexes over all choices of auxiliary
      data and all the maps $a_{\al}^\be$ is a transitive system.  Write
      $\skft(\diagram)$ for the canonical representative (i.e. the inverse limit
      of the system).
    \item If $\diagram$ is a crossingless diagram of the unknot, then
      $H(\skft(\diagram)) \cong \Kh(\diagram)$.
    \item Let $\diagram \cup \diagram'$ be a disjoint union of diagram.  Then
      \[
        \skft(\diagram \cup \diagram') \simeq \skft(\diagram) \otimes \skft(\diagram').
      \]
    \item Suppose that $\diagram'$ is obtained from $\diagram$ by a
      diagrammatic handle attachment.  There is a function $\phi$ from the
      auxiliary data for $\diagram$ to the auxiliary data for $\diagram'$ and
      a map
      \[
        \handle[A_\al,\phi(A_\al),B] \co \skft(\diagram,A_\al) \to \skft(\diagram',\phi(A_{\al'}))
      \]
      where $B$ is some additional auxiliary data.  For fixed $B$, these maps
      form a map of transitive systems and therefore form a map
      $\handle[\beta] \co \skft(\diagram) \to \skft(\diagram')$.  For any two
      sets of additional data $B$ and $B'$, $\handle[\beta] \simeq
      \handle[\beta']$.
  \item Let $U$ be a crossingless diagram of an unknot.  Then $\skft(U)$
    is a Frobenius algebra with operations given by the handle attachment
    maps.  We say that $\skft$ is a strong Khovanov-Floer theory \emph{over} this Frobenius algebra. 
  \item If $\diagram'$ is obtained from $\diagram$ by a planar isotopy,
    then $\skft(\diagram)$ is filtered chain homotopy equivalent to
    $\skft(\diagram')$.
  \item Let $\diagram$ be the disjoint union of two diagrams $\diagram_0$
    and $\diagram_1$.  Let $\Sigma$  a diagrammatic cobordism from
    $\diagram$ to $\diagram'$.  Suppose that $\diagram'$ is the disjoint
    union of $\diagram'_0$ and $\diagram'_1$ and that $\Sigma$ is the
    disjoint union of $\Sigma_0$ and $\Sigma_1$, where $\Sigma_i$ is a
    cobordism from $\Sigma_i$ to $\Sigma'_i$.  Then $\skft(\Sigma) \simeq
    \skft(\Sigma_0) \otimes \skft(\Sigma_1)$.
  \item Handle attachment maps with disjoint supports commute up to
      filtered chain homotopy.  The map attached to a pair of canceling diagrammatic handle attachments is chain homotopic to the identity. 
\end{enumerate}
\end{Def}

Khovanov homology, Bar-Natan homology, Heegaard Floer homology of
branched double covers, singular instanton homology, and \Szabo homology are
strong Khovanov-Floer theories.  Every strong Khovanov-Floer theory we
are aware of satisfies an additional condition.
\begin{Def}\label{def:conic}
 Let $\diagram$ be a link diagram with crossings.  Pick a crossing and write
 $\diagram_0$ and $\diagram_1$ for the zero- and one-resolution of that
 crossing, respectively.  Let $\skft$ be a strong Khovanov-Floer theory.
 $\skft$ is \emph{conic} if
 \[
   \skft(\diagram) \simeq \cone(\handle[c] \co \skft(\diagram_0) \to \skft(\diagram_1))
 \]
 where $\handle[c]$ is the diagrammatic one-handle attachment along the arc
 shown in Figure \ref{fig:1handle}.
\end{Def}

\begin{figure}[h]
  \centering
  \includegraphics[width=.6\linewidth]{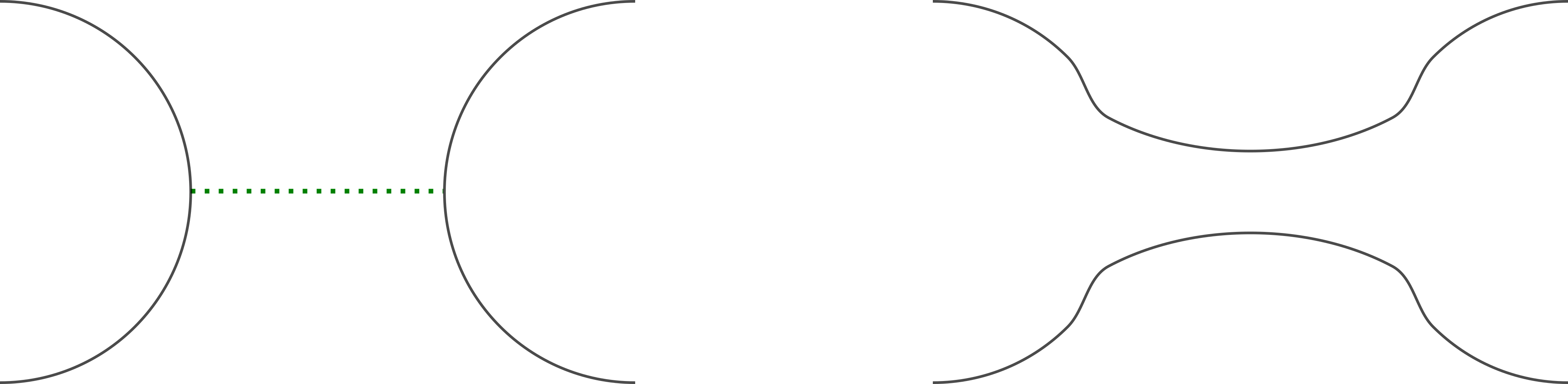}
  \caption{A diagrammatic one-handle attachment along the dotted green arc.}
  \label{fig:1handle}
\end{figure}

Being conic essentially means that $\skft(\diagram)$ can be computed from a cube
of resolutions. The central technical result of \cite{Saltz2017a} connects these
theories to Bar-Natan's \emph{formal Khovanov bracket} $\BN{-}$, see ~\cite{BarNatan2005}.  
\begin{Prop}\label{prop:mainTech}
  Let $\skft(-)$ be a conic, strong Khovanov-Floer theory.  Let $\BN{-}$ be
  Bar-Natan's formal Khovanov bracket.  $\skft(-)$ factors through $\BN{-}$ as a functor.

  More precisely, recall that $\BN{-}$ is a functor from a category of diagrams and cobordisms to the category $\matcob$.  There is a
  functor $F_\skft$ from a certain subcategory of $\matcob$ to $\Kom$ so that
  $\skft = F_\skft (\BN{-})$.
\end{Prop}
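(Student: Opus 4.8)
The plan is to build $F_\skft$ directly from the Frobenius-algebra and handle-attachment structure packaged in Definition \ref{def:skft}, and then to recognize $\skft(\diagram)$ as the total complex obtained by applying $F_\skft$ termwise to the cube of resolutions $\BN{\diagram}$. Recall that every object of $\cob$ is a disjoint union of circles and that $\matcob$ is generated as an additive category by such objects together with the elementary cobordisms between them (births, deaths, saddles, and dots), subject to Bar-Natan's local relations; so to specify a functor it suffices to specify its value on these generators and to check that it respects the relations. First I would set $F_\skft$ on objects: a single circle is sent to the Frobenius algebra $\skft(U) \cong \bF[X]/(X^2)$, viewed as a complex in homological degree $0$; a disjoint union of $k$ circles is sent to $\skft(U)^{\otimes k}$ using the disjoint-union axiom (3); and a formal direct sum in $\matcob$ is sent to the corresponding direct sum of complexes. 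On generating morphisms I would send a saddle to the handle-attachment map $\handle$, a birth and a death to the unit and counit of the Frobenius algebra, and a dot to the endomorphism given by multiplication by $X$; matrices of cobordisms go to matrices of the associated maps.

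The core of the argument is then to verify that these assignments descend to a functor, that is, that they respect composition and the local relations. Bar-Natan's relations --- a closed sphere evaluates to zero, a once-dotted sphere to one, the neck-cutting relation, and the vanishing of two dots on a single sheet --- are precisely the relations defining the Frobenius structure on $\bF[X]/(X^2)$, so they are satisfied on the nose by axiom (5). Functoriality on composites of disjoint elementary cobordisms follows from axiom (8): handle maps with disjoint support commute, and a canceling pair of handles is homotopic to the identity. This is where one cannot avoid homotopy, since the handle maps, the disjoint-commutation isomorphisms, and the identifications supplied by the auxiliary-data equivalences of axiom (1) all hold only up to filtered chain homotopy. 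Consequently $F_\skft$ should be constructed as a functor into the homotopy category $\Kom$, using the canonical representatives (the inverse limits of the transitive systems of axiom (1)) to pin down the objects and using axioms (4) and (8) to make the morphism assignment well defined up to homotopy.

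Having defined $F_\skft$ on (the relevant subcategory of) $\matcob$, I would prove $\skft(\diagram) \simeq F_\skft(\BN{\diagram})$ by induction on the number of crossings of $\diagram$. The bracket $\BN{\diagram}$ is, by construction, the mapping cone of the saddle cobordism $\handle[c] \co \BN{\diagram_0} \to \BN{\diagram_1}$ along a chosen crossing; applying the additive functor $F_\skft$, which takes mapping cones to mapping cones, turns this into the mapping cone of $\handle[c] \co F_\skft(\BN{\diagram_0}) \to F_\skft(\BN{\diagram_1})$. The conic hypothesis (Definition \ref{def:conic}) says precisely that $\skft(\diagram) \simeq \cone(\handle[c] \co \skft(\diagram_0) \to \skft(\diagram_1))$, so the inductive hypothesis applied to the two fewer-crossing diagrams $\diagram_0$ and $\diagram_1$ closes the induction; the base case is a crossingless diagram, handled by axioms (2), (3), and (5).

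The main obstacle I expect is exactly the rigidification in the middle paragraph: turning a collection of structures that are each defined only up to filtered chain homotopy into a single honest functor. One must check that the homotopies provided by the axioms can be organized coherently --- for instance, that the homotopy witnessing disjoint commutativity and the homotopies implicit in the transitive system of axiom (1) interact compatibly --- so that $F_\skft(\BN{\diagram})$ is independent of the choices of crossing ordering and auxiliary data up to filtered chain homotopy. Managing this bookkeeping, rather than any single computation, is the crux; the individual relation-checks are forced by the Frobenius-algebra axioms and are essentially formal.
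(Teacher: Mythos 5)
First, a structural point: this paper does not actually prove Proposition \ref{prop:mainTech}. It is quoted as the central technical result of \cite{Saltz2017a}, so there is no in-paper argument to compare yours against; your proposal can only be judged against what such a proof must accomplish.

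Your outline has the right overall shape --- define $F_\skft$ on the generators of the cobordism category (circles to $\skft(U)$, disjoint unions to tensor products via axiom (3), saddles and births/deaths to handle-attachment maps, matrices to matrices), check the local relations against the Frobenius structure, and recover $\skft(\diagram)$ by induction on crossings using conicity. But there is a genuine gap at exactly the point you identify and then set aside: you propose to handle the fact that all the axioms hold only up to filtered chain homotopy by ``constructing $F_\skft$ as a functor into the homotopy category.'' That does not suffice for either of the two steps where you use $F_\skft$. The composite $F_\skft(\BN{\diagram})$ must be an honest chain complex: $\BN{\diagram}$ is a complex over $\matcob$, and applying a functor termwise yields a complex only if the functor respects composition strictly --- if it does so merely up to homotopy, the faces of the cube anticommute only up to homotopy and the resulting ``differential'' fails to square to zero (unless one feeds the homotopies and higher homotopies into a twisted total complex, which is additional coherence data your axioms do not obviously supply). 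Likewise your inductive step asserts that $F_\skft$ ``takes mapping cones to mapping cones,'' but cones are not functorial in the homotopy category; this again requires a strict functor. Producing strictly commuting data from the homotopy-coherent axioms --- using the transitive systems of axiom (1) and the commutation/cancellation properties of axiom (8) to rigidify the handle maps, on a suitable subcategory of $\matcob$ (note the proposition's phrase ``a certain subcategory'') --- is the actual content of the result in \cite{Saltz2017a}, and it is precisely the step your proposal leaves unproved. Two smaller points: the proposition concerns a general conic, strong Khovanov-Floer theory, whose Frobenius algebra is some $\bF[X]/(r(X))$, so identifying the value of a circle with $\bF[X]/(X^2)$ is a restriction; and the relations you verify (dotted sphere, two dots on a sheet, neck-cutting) are those of the dotted category $\cob_{\bullet}$ relevant to Proposition \ref{prop:dottedSFKT}, whereas the undotted bracket of this proposition is governed by the sphere, torus, and $4Tu$ relations.
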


This is the point of Definition \ref{def:skft}: theorems about the formal
Khovanov bracket become theorems about every strong Khovanov-Floer theory.  So to prove the conjectures,
we will reframe the proofs of the equivalent facts for Khovanov homology in
terms of the formal bracket.

\section{The basepoint action and dotted cobordisms}\label{sec:basepointActions}

By a \emph{basepoint} of a link diagram $\diagram$ we mean a point which is not a double point.

\begin{Def}\label{def:basepointAction}
Let $\skft$ be a strong Khovanov-Floer theory over the Frobenius algebra $R = \bF[X]/(r(X))$ for some polynomial $r$. Let $U$ be a crossingless diagram of the unknot. Fix a chain homotopy equivalence $\skft(U) \simeq R$.  Multiplication by $X$ induces a chain homotopy class of map
\[
  \chi \co \skft(\circ) \to \skft(\circ).
\]
Let $\diagram$ be a link diagram and let $p \in \diagram$ be a basepoint. Define
\[
  \chi_p \co \skft(\diagram) \to \skft(\diagram)
\]
as the composition
\begin{align*}
  \skft(\diagram) \arrowunder{Z} \skft(\diagram) \otimes \skft(\circ) \arrowunder{\Id \otimes \chi} \skft(\diagram) \otimes \skft(\circ) \arrowunder{\handle[\gamma]} \skft(\diagram)
\end{align*}
The first map is a zero-handle attachment near $p$.  The third map is a one-handle attachment along an embedded arc $\gamma$ which connects the new component to $p$. 
\end{Def}

\begin{Lem}\label{lem:xAction}
  Let $p \in \diagram$ be a basepoint.  The map $X_p$ defines an action of $R$ on $\skft(\diagram)$. 
\end{Lem}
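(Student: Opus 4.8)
The goal is to upgrade the chain homotopy class $\chi_p$ of Definition \ref{def:basepointAction} to a genuine action, i.e. a ring homomorphism from $R = \bF[X]/(r(X))$ to the endomorphism ring of $\skft(\diagram)$ in the chain homotopy category (the map written $X_p$ in the statement is $\chi_p$). Because $R$ is commutative and generated by $X$ subject only to the relation $r(X) = 0$, producing such a homomorphism amounts to checking three things: that $\chi_p$ is a well-defined chain homotopy class; that the unit $1 \in R$ acts by $\Id$; and, crucially, that $r(\chi_p) \simeq 0$. The first two are formal, and the third is the real content.

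First I would check well-definedness. The zero- and one-handle maps entering $\chi_p$ are canonical up to filtered chain homotopy by axiom 4 of Definition \ref{def:skft}, so $\chi_p$ depends a priori only on the isotopy class of the arc $\gamma$ and the placement of the new circle. Any two such choices are related by planar isotopy and handle slides, whose effect is absorbed by axioms 6 and 8; hence $\chi_p$ is a well-defined class. The claim that $1 \in R$ acts by $\Id$ is the statement that attaching a circle carrying the unit $1 \in \skft(\circ) \cong R$ and immediately merging it back along $\gamma$ is a canceling pair of handle attachments, so the composite is chain homotopic to the identity by the final clause of axiom 8.

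The crux is the following assertion, from which $r(\chi_p) \simeq 0$ is immediate: for every polynomial $q$, the endomorphism $q(\chi_p)$ is chain homotopic to the composite which attaches one zero-handle near $p$, multiplies the new circle by $q(X) \in R$, and merges it into the $p$-component along $\gamma$. Granting this and taking $q = r$, the middle map becomes multiplication by $r(X)$, which is the zero element of $\skft(\circ) \cong R = \bF[X]/(r(X))$; the composite is therefore the zero class and $r(\chi_p) \simeq 0$, as needed. To prove the assertion I would argue by linearity and induction on monomials $X^k$: iterating $\chi_p$ attaches $k$ small circles near $p$, each multiplied by $X$, and merges them into the $p$-component one at a time. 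Using that handle attachments supported in disjoint disks commute up to chain homotopy (axiom 8), these circles may instead be merged among themselves first; by the associativity of the Frobenius multiplication realized by the one-handle maps (axiom 5), the $k$ circles labelled $X$ fuse to a single circle labelled $m(X \otimes \cdots \otimes X) = X^k \in R$, which is then merged into the $p$-component.

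I expect the main obstacle to be precisely this rearrangement step: verifying that merging the auxiliary circles into the base component separately and merging them together first induce chain homotopic maps. This is a cobordism-theoretic locality statement---the two composites are built from isotopic dotted cobordisms supported near $p$---and the cleanest way to nail it down is through the factorization of $\skft$ through Bar-Natan's formal bracket $\BN{-}$ provided by Proposition \ref{prop:mainTech}, which turns the required chain homotopy into an identity of morphisms in $\matcob$. Once the rearrangement is in hand, multiplicativity $q_1(\chi_p)\,q_2(\chi_p) \simeq (q_1 q_2)(\chi_p)$ of the resulting assignment follows from the same monomial computation, completing the verification that $\chi_p$ defines an action of $R$.
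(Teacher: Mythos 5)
Your proposal is correct and follows essentially the same route as the paper: both reduce the lemma to showing $r(\chi_p) \simeq 0$ by rewriting iterates of $\chi_p$ --- commuting the disjointly-supported handles and fusing the auxiliary circles by the Frobenius multiplication --- into a single zero-handle attachment carrying $r(\chi)$ merged into $\diagram$ along $\gamma$, which is null-homotopic because $r(X) = 0$ in $R = \bF[X]/(r(X))$. The only cosmetic difference is that the paper performs your ``rearrangement step'' (its swapping of the two one-handles) directly from Conditions 5--8 of Definition \ref{def:skft}, rather than passing through the factorization of Proposition \ref{prop:mainTech}.
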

\begin{proof}   
  It suffices to show that $r(\chi_p) \simeq 0$.  Let $p \in U$.  By Condition 5 of
  Definition \ref{def:skft}, the map $r(\chi_p)$ induces $r(\chi)$ on $R$ via the
  chain homotopy equivalence $\skft(U) \simeq R$.  Therefore $r(\chi_p)$ is null-homotopic on $\skft(\circ)$.

  Observe that $\chi_p \circ \chi_p$ is chain homotopic to the following composition:
 \begin{align*}
   \skft(\diagram) &\arrowunder{Z} \skft(\diagram) \otimes \skft(U) \arrowunder{Z} \skft(\diagram) \otimes \skft(U) \otimes \skft(U) \\
                   &\arrowunder{\Id \otimes X \otimes X} \skft(\diagram) \otimes \skft(U) \otimes \skft(U)  \arrowunder{\Id \otimes \handle[\ga']} \skft(\diagram) \otimes \skft(U) \\
   &\arrowunder{\handle[\ga]} \skft(\diagram)
 \end{align*}
 Here $\ga'$ is an embedded arc connecting the two copies of $U$.  This follows
 from writing down the definition of $\chi_p \circ \chi_p$ and swapping the order of the two one-handles.  In general, $r(\chi_p)$ is chain homotopic to a map
 \[
   \skft(\diagram) \arrowunder{\Id \otimes Z} \skft(\diagram) \otimes \skft(\circ) \arrowunder{\Id \otimes r(\chi)} \skft(\diagram) \otimes \skft(\circ) \arrowunder{\handle[\gamma]} \skft(\diagram)
 \]
where $\gamma$ connects the new component to $p$.  So $r(X) \simeq 0$
implies that $r(X_p) \simeq 0$.
\end{proof}

This is not necessarily the Khovanov basepoint action, even if $\skft(\diagram)$
can be arranged to have the same rank as $\CKh(\diagram)$.  It is a fun exercise
to characterize the basepoint action on $\CSz(\diagram)$ in terms of \Szabo's
decorations and configurations.  (See the proof of Corollary
\ref{cor:basepointInvariance} for the answer.)

\subsection{Dotted cobordisms}\label{subsec:dottedCobordisms}

Over $\bF$, the map $\Id \otimes X$ in Definition \ref{def:basepointAction} does
not have an obvious cobordism-theoretic interpretation.\footnote{Over a ring
  without 2-torsion, multiplication by $X$ can be represented as attaching a
  tube.} In~\cite{BarNatan2005}, multiplication by $X$ is represented by dots on
a cobordism. In this section we check carefully that the dotted cobordism regime
makes sense for  strong Khovanov-Floer theories.

Using the results of \cite{Saltz2017a} it suffices to define dotted cobordisms embedded in $\br^2 \times I$.  A \emph{dotted zero-handle attachment} is a zero-handle attachment with a dot at the critical point.  To this we assign the map
\[
  \skft(\diagram) \arrowunder{Z} \skft(\diagram) \otimes \skft(\circ) \arrowunder{\Id \otimes X} \skft(\diagram) \otimes \skft(\circ) \longrightarrow \skft(\diagram \cup \circ) 
\]
A \emph{dotted two-handle attachment} is two-handle attachment with a dot at the
critical point.  To it we assign the dual of the zero-handle map.  Let $W \co
\diagram \to \diagram'$ be a cobordism without critical points.  Put a dot at
the point $p$ on $W$.  This is short-hand for a dotted zero-handle attachment near $p$ and a one-handle connecting the new component and $p$.  We call this a \emph{generic dot}.

\begin{Lem}\label{lem:dots}
  Let $W \co L \to L'$ be a link cobordism in $\br^2 \times I$ in Morse position.  Let $p \in W$ be an interior point which is not the critical point of a one-handle attachment.  Let $q$ be another such point on the same component of $W$.  Write $W_p$ and $W_q$ for the cobordisms with dots at $p$ and $q$.  Then $\skft(W_p) \simeq \skft(W_q)$.
\end{Lem}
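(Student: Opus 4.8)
The plan is to transport the generic dot from $p$ to $q$ along a path in $W$ and to check that each elementary move in this transport preserves the filtered chain homotopy class of the induced map. Since $p$ and $q$ lie on the same component of $W$, choose a path $\eta$ in $W$ from $p$ to $q$ missing the finitely many critical points of the Morse function. Put $W$ in a handle decomposition $W = W_n \circ \cdots \circ W_1$; by Proposition \ref{prop:mainTech} the functor $\skft = F_\skft(\BN{-})$ respects composition and sends homotopic morphisms to homotopic chain maps, so $\skft(W)$ is the composite of the handle maps $\skft(W_i)$ and it suffices to track the dot's ``lollipop'' (a dotted zero-handle together with a one-handle to its foot, as in Definition \ref{def:basepointAction}) as the foot travels along $\eta$. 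Covering $\eta$ by finitely many short intervals, each crosses at most one critical level, and any birth or death met along the way attaches a circle disjoint from the dot, so it commutes past the lollipop by Condition 8 of Definition \ref{def:skft}; the dot can reach a newly born sheet only through a later saddle. Thus it suffices to prove: (a) within a critical-point-free region the dot's class is unchanged under isotopy of its foot, and (b) the dot's class is unchanged as its foot crosses a saddle (one-handle) level.

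For (a), recall that the generic dot is a dotted zero-handle together with a one-handle joining the new dotted circle to the foot. By Condition 4 the one-handle map is independent of the connecting arc up to chain homotopy, and by Condition 6 planar isotopies of the configuration act by filtered chain homotopy equivalences. Hence the class of the dot depends only on the resolution-circle on which its foot lies, not on the precise location, and within a product region—where that circle is carried isotopically—the dot slides freely. This is essentially the computation already carried out in Lemma \ref{lem:xAction}, which shows the foot may be placed anywhere on a fixed circle.

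Case (b) is the crux. Near a saddle the local model is a merge or a split of the circle carrying the foot, and the claim is that a dot just below the saddle may be pushed to either sheet just above it. Expanding the generic dot into handle maps exactly as in the proof of Lemma \ref{lem:xAction}, and using that $\skft(U)$ is a commutative Frobenius algebra (Condition 5) together with the commutation and cancellation of disjoint handles (Condition 8), one must produce the chain homotopies realizing the Frobenius identities $\handle[m] \circ (\chi \otimes \Id) \simeq \chi \circ \handle[m] \simeq \handle[m] \circ (\Id \otimes \chi)$ for a merge, and $\Delta \circ \chi \simeq (\chi \otimes \Id) \circ \Delta \simeq (\Id \otimes \chi) \circ \Delta$ for a split, where $\chi$ is the multiplication-by-$X$ map and $\Delta$ is the comultiplication induced by the splitting handle. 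Granting these lifts, the dot crosses each saddle without changing its filtered chain homotopy class.

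Composing the moves of (a) and (b) along $\eta$ then yields $\skft(W_p) \simeq \skft(W_q)$. The main obstacle is (b): the underlying algebraic Frobenius identities are immediate on $R = \bF[X]/(X^2)$, but their lifts to filtered chain homotopies must be extracted from the handle calculus of Definition \ref{def:skft}—reordering the zero-, one-, and two-handles defining the two generic dots and invoking Condition 8 to cancel the resulting canceling pairs, in the style of Lemma \ref{lem:xAction}. Everything else, namely the reduction to a path of elementary moves and case (a), is routine bookkeeping with Conditions 4, 6, and 8.
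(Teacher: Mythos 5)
Your overall strategy---transporting the dot along a path and reducing to finitely many elementary moves---matches the paper's proof, and your case (a) is fine (the paper simply calls it clear). But your case (b), which you yourself label ``the crux'' and ``the main obstacle,'' is never actually proved: you write ``granting these lifts'' of the Frobenius identities $\chi \circ \handle[m] \simeq \handle[m] \circ (\chi \otimes \Id)$ and their comultiplication analogues, and defer their construction to an unspecified reordering of handles. That is a genuine gap---producing those homotopies from the axioms of Definition \ref{def:skft} is precisely the content of the lemma, so they cannot be granted. Moreover, your framing makes the step look harder than it is. The paper's argument for this case invokes no Frobenius-identity lifts at all: a generic dot is a dotted zero-handle together with a connecting one-handle, and both of these have \emph{planar} support disjoint from the band of the index-one critical point. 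Hence by Conditions 7 and 8 of Definition \ref{def:skft} (maps of disjointly supported handles commute up to filtered chain homotopy) the two handles constituting the dot can simply be slid past the critical level in the time direction; the dot reappears at the same planar position on the other side, and one concludes by case (a). The dot never has to ``cross the saddle'' on the surface---the time-reordering of disjointly supported handle attachments does all the work, which is why the multiplicative structure of $\skft(U)$ is never needed.

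A second, smaller omission: the lemma permits $p$ or $q$ to be the critical point of a zero- or two-handle attachment (only one-handle critical points are excluded), in which case $W_p$ is defined by the dotted zero- or two-handle map of Section \ref{subsec:dottedCobordisms} rather than by a lollipop. Since your path $\eta$ is chosen to miss all critical points, this case never arises in your argument. The paper treats it as a separate third case, comparing the dot at the critical point with a nearby generic dot via the cancellation statement of Condition 8.
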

\begin{proof}
  Connect the dots: as long as $p$ and $q$ lie on the same component of $W$, there is a path between them through generic points except possibly at $p$ and $q$.  It therefore suffices to show that $\skft(W_p) \simeq \skft(W_q)$ in the cases that
  \begin{itemize} 
  \item $p$ and $q$ are generic and there is a path between them which does not pass a critical level.
  \item $p$ and $q$ are generic and the path between them passes through a single critical level.
  \item $p$ is the critical point of a zero- or two-handle attachment, $q$ is generic, and there is a path between them which does not pass a critical level.
  \end{itemize}
  The first point is clear.  Suppose that $p$ and $q$ are separated by an index one critical point so that
  $p$ is above it.  Replace the generic dot at $p$ with a dotted zero-handle and
  a canceling one-handle.  Slide the one-handle past the index one critical point.  Now
  slide the dotted two-handle past the index one critical point.  These moves do
  not affect the chain homotopy type of $\skft(W_p)$ because of conditions
  7 and 8 of Definition \ref{def:skft}.  So by the first point, $\skft(W_p)
  \simeq \skft(W_q)$.  
  
  Suppose that $p$ lies at the critical point of a zero-handle attachment
  and that $q$ is a generic dot which lies just next to it.  Then $\skft(W_p)
  \simeq \skft(W_q)$ using Condition 8 of Definition \ref{def:skft}.  A similar argument applies to two-handle attachments. 
 \end{proof}

The arguments of \cite{Saltz2017a} imply the following Proposition.

\begin{Prop}\label{prop:dottedSFKT}
  Let $\cob_{\bullet}$ be the category whose objects are planar diagrams and
  whose morphisms are dotted and undotted cobordisms in $\br^2 \times I$.  There
  is a dotted Khovanov bracket $\BN{-}_{\bullet}$ defined in\cite{BarNatan2005}.  

  Every conic strong Khovanov-Floer theory factors through $\BN{-}_{\bullet}$.
\end{Prop}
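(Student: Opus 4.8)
The plan is to extend the functor $F_\skft$ of Proposition \ref{prop:mainTech} over the dots. Bar-Natan's dotted category $\cob_\bullet$ is obtained from the undotted cobordism category by adjoining one new generating morphism, a dot on a sheet, subject to the usual cobordism relations together with Bar-Natan's \emph{local dotted relations}: an undotted sphere is zero, a once-dotted sphere is $1$, two dots on the same sheet vanish (so the dot behaves like $X$ with $X^2 = 0$), and the neck-cutting relation replacing a cylinder by a sum of two dotted caps. Subsection \ref{subsec:dottedCobordisms} already specifies where the new generator should go: a dot is sent to the basepoint action $\chi$, that is, multiplication by $X$ on the local circle. So the task is purely to check that this assignment descends to a well-defined functor $F_\skft^\bullet \co \matcobbullet \to \Kom$ with $\skft = F_\skft^\bullet(\BN{-}_\bullet)$; equivalently, that each defining relation of $\cob_\bullet$ is sent to a chain homotopy. (The dot relations live over $R = \bF[X]/(X^2)$, which is the relevant Frobenius algebra.)

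First I would dispose of the relations that do not essentially involve dots: these are precisely the relations of the undotted category, and they are already handled by Proposition \ref{prop:mainTech}. Next, Lemma \ref{lem:dots} shows that the image of a dotted cobordism is independent, up to chain homotopy, of where the dot sits along its component; so the assignment is well-defined on morphisms, and I may slide dots freely to whichever local model is most convenient for checking a given relation. It then remains only to verify the genuinely dotted local relations.

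The relation $\bullet^2 = 0$ is immediate: two dots on the same sheet give $\chi_p \circ \chi_p$, which is the $r(X) = X^2$ case of Lemma \ref{lem:xAction}, hence null-homotopic over $R$. The sphere relations are direct computations in the Frobenius algebra $\skft(U) \simeq R$ furnished by Condition 5 of Definition \ref{def:skft}: an undotted sphere is a birth followed by a death, i.e.\ $\eps \circ \eta$, which vanishes since $\eps(1) = 0$, while a once-dotted sphere is $\eps \circ \chi \circ \eta = \eps(X) = 1$. The crux is the neck-cutting relation. Locally a neck is the identity cylinder on a circle, and after sliding all dots into that region the relation to be checked becomes the single Frobenius-algebra identity
\[
  \eta \circ \eps \circ \chi \;+\; \chi \circ \eta \circ \eps \;=\; \Id_R,
\]
whose two summands are the two dotted cap--cup cobordisms (dot below and dot above the cut); this is verified directly on the basis $\{1, X\}$ of $R$.

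The main obstacle is the passage from this local identity to an actual chain homotopy between the corresponding maps on $\skft(\diagram)$ for an arbitrary ambient cobordism, since the neck being cut sits inside a larger, possibly complicated, cobordism rather than inside $\skft(U)$ alone. This is exactly where the factorization of Proposition \ref{prop:mainTech} earns its keep: the two sides of the neck-cutting relation are cobordisms with the same source and target that differ only inside a ball, so by functoriality of $F_\skft$ it suffices to compare them inside that ball, where $\skft$ restricts to the Frobenius algebra $R$ and the identity above applies. Realizing the cut as a canceling pair consisting of a dotted two-handle and a zero-handle, and commuting these past the remainder of the cobordism, uses Conditions 7 and 8 of Definition \ref{def:skft} in precisely the manner of the proof of Lemma \ref{lem:dots}; this is the only place any real care is required. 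With all four local relations verified up to chain homotopy, $F_\skft^\bullet$ is a functor and $\skft = F_\skft^\bullet(\BN{-}_\bullet)$, as claimed.
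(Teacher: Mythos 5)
Your proposal is correct in substance but takes a genuinely different route from the paper's on the key point. The paper's proof is essentially a one-line appeal to re-running the arguments of \cite{Saltz2017a} with the dotted handle maps (which you define the same way, via the $X$-action): the only new functoriality check is that dotted handles slide past other handles, which is an instance of Condition 7 of Definition \ref{def:skft}, with dot mobility supplied by Lemma \ref{lem:dots}. Crucially, the paper does \emph{not} verify the dotted neck-cutting relation as a hypothesis of the factorization; it derives it afterwards as a \emph{consequence} of Lemma \ref{lem:dots} and the 4Tu relation, which is already available in the image of the functor because the undotted theory factors through $\matcob$ by Proposition \ref{prop:mainTech}. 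You instead treat $\cob_{\bullet}$ via Bar-Natan's presentation by generators and relations and check each dotted local relation directly: two dots via Lemma \ref{lem:xAction}, the sphere relations by computation in $R$, and neck-cutting via the identity $\eta\circ\eps\circ\chi + \chi\circ\eta\circ\eps = \Id_R$ plus a locality argument. Your approach is more self-contained and makes explicit exactly which relations must descend; the paper's is slicker on the hard relation, since inheriting 4Tu from the undotted factorization sidesteps any local-to-global argument for neck-cutting. One caution on your crux step: the claim that ``by functoriality of $F_\skft$ it suffices to compare them inside that ball'' is not literally a consequence of functoriality --- locality is not automatic for these theories --- but your following sentence supplies the correct mechanism (splitting off the neck as a disjoint-union slice via Condition 7, realizing the cut as a canceling dotted two-handle/zero-handle pair, and commuting handles as in Lemma \ref{lem:dots}), and since chain homotopies tensor with identities and are preserved by pre- and post-composition, the argument does go through. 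Finally, your explicit restriction to $R = \bF[X]/(X^2)$ is not pedantry: it is genuinely needed for the two-dot relation, a hypothesis the paper's statement of the Proposition omits.
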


The only thing to check is that dotted handles slide past each other.  This is an implication of condition 7 of \ref{def:skft}.  From Lemma \ref{lem:dots} and the 4Tu relation we get the \emph{dotted
  neck-cutting relation} of Figure \ref{fig:neckcutting} and~\cite{BarNatan2005}.
\begin{figure}[h]
  \centering
  \def\svgwidth{.5\linewidth}
  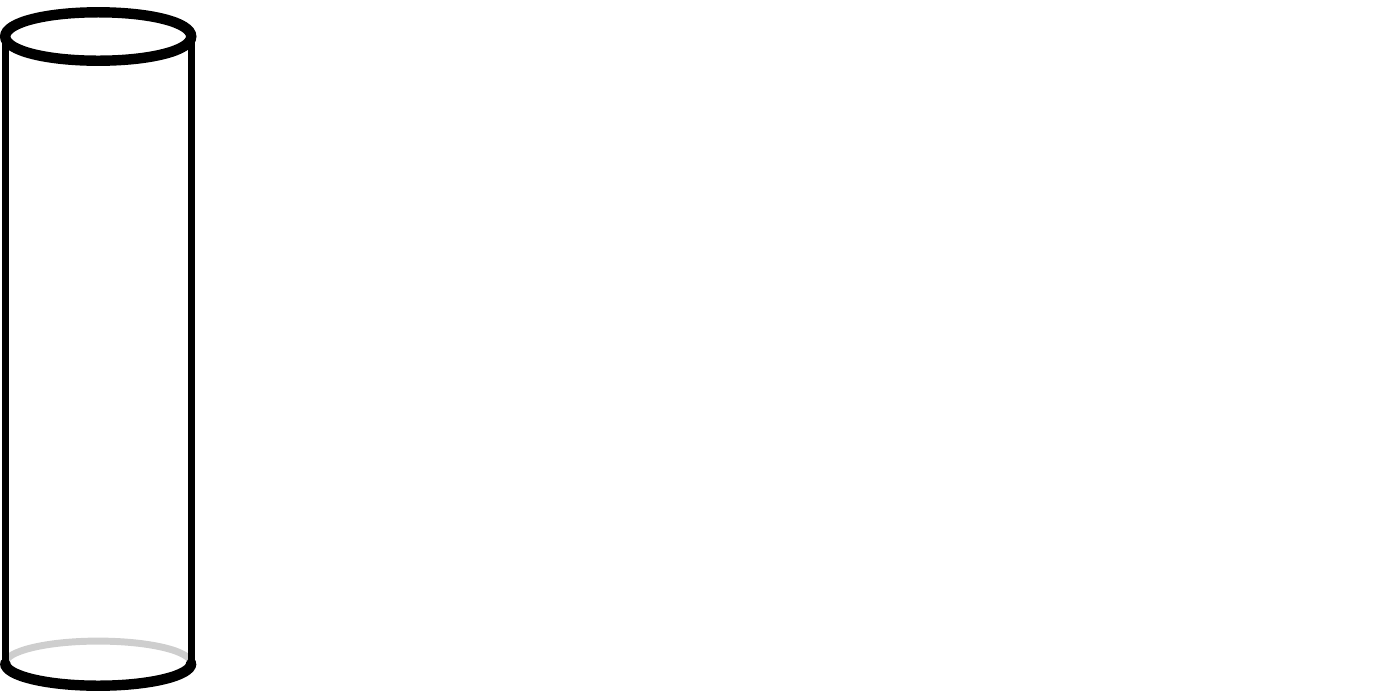
  \caption{The dotted neck-cutting relation.}
 \label{fig:neckcutting} 
\end{figure}

\subsection{Reduced theories}\label{subsec:reduced}

\begin{Def}\label{def:reduced}
  Let $\skft$ be a strong Khovanov-Floer theory over $\bF[X]/(X^2)$.  The complex $\Im(X_p)$ is called \emph{$\skft$ reduced at $p$} or \emph{the reduced version of $\skft$}.  We write often use the notation $\rskft_p$ or just $\rskft$.
\end{Def}

Now we can prove Seed's first two conjectures.

\begin{Prop}\label{prop:invariance}
  Let $\skft$ be a conic, strong Khovanov-Floer theory over $\bF[X]/(X^2)$.  Let $p$ and $q$ be two basepoints on $\diagram$.  Then $\rskft_p(\diagram) \simeq \rskft_q(\diagram)$ and
  \[
    \skft(\diagram) \cong \rskft(\diagram) \oplus \rskft(\diagram).
  \]
\end{Prop}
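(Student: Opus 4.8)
The plan is to prove both statements by working entirely inside the formal bracket, using Proposition \ref{prop:mainTech} and the dotted-cobordism machinery of Proposition \ref{prop:dottedSFKT}. The key observation is that over $\bF[X]/(X^2)$ the element $X$ is idempotent-like in the sense that $X^2 = 0$, so the basepoint map $\chi_p$ satisfies $\chi_p \circ \chi_p \simeq 0$ by Lemma \ref{lem:xAction}. This means $\chi_p$ is a chain map of square-zero, and its image $\Im(\chi_p) = \rskft_p(\diagram)$ is a subcomplex. The two claims are then: (a) the homotopy type of this subcomplex does not depend on $p$, and (b) the whole complex splits as two copies of the reduced one.

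For the invariance statement $\rskft_p \simeq \rskft_q$, I would first reduce to the case where $p$ and $q$ lie on the same component of a single resolution, and then translate the problem into a statement about dots on a cobordism. The point is that $\chi_p$ is, up to homotopy, the map induced by placing a generic dot near $p$ (this is essentially the content of Definition \ref{def:basepointAction} read through the dotted-cobordism dictionary of Section \ref{subsec:dottedCobordisms}). By Lemma \ref{lem:dots}, a dot can be slid freely along any component of a cobordism without changing the chain homotopy type of the induced map. Applying this fiberwise over the cube of resolutions -- which is legitimate because $\skft$ factors through $\BN{-}_\bullet$ -- shows that $\chi_p \simeq \chi_q$ as chain maps whenever $p$ and $q$ are joinable by a path of generic points, and hence the images are filtered chain homotopy equivalent. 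The case where $p$ and $q$ are on genuinely different components of $\diagram$ requires connecting them through the handle-attachment structure of the full complex, but this is exactly where Lemma \ref{lem:dots} (and the ``connect the dots'' argument) does the work.

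For the splitting statement, the plan is to use that $X$ generates a short exact sequence of $\bF[X]/(X^2)$-modules $0 \to (X) \to \bF[X]/(X^2) \to \bF \to 0$ where $(X) \cong \bF$, and that over $\bF$ this sequence splits. Concretely, on the Frobenius algebra $R = \bF[X]/(X^2)$ one has $R \cong \Im(X) \oplus \ker(X)$ as $\bF$-vector spaces, with $\ker(X) = \Im(X) = (X)$; the multiplication-by-$X$ map identifies the quotient $R/(X)$ with $(X)$. I would lift this algebraic splitting to the chain level by exhibiting a chain homotopy equivalence $\skft(\diagram) \simeq \cone(\chi_p)\text{-type decomposition}$, or more directly by noting that since $\chi_p^2 \simeq 0$ and $\chi_p$ has the appropriate rank on the local model $\skft(\circ) \simeq R$, the complex decomposes as $\Im(\chi_p) \oplus \ker(\chi_p)/\Im(\chi_p)$ with both summands homotopy equivalent to $\rskft$. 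The quantum grading shift by $\{\pm 1\}$ tracks the degrees of $1$ and $X$ in $R$.

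The main obstacle I expect is the splitting statement rather than the invariance statement. Invariance is essentially a formal consequence of Lemma \ref{lem:dots} once everything is phrased through the bracket. The splitting, however, requires knowing that the local algebraic splitting $R \cong \rskft(\circ) \oplus \rskft(\circ)$ propagates coherently across the entire cube of resolutions in a way compatible with all the handle maps -- equivalently, that $X_p$ behaves as a \emph{strictly} (not just up-to-homotopy) square-zero map on a well-chosen model, so that the naive image/coimage decomposition is actually a direct sum decomposition of complexes. Handling the filtered and chain-homotopy subtleties here, as opposed to pointwise at a single unknot, is the delicate part; I would expect to invoke the Frobenius-algebra structure from Condition 5 of Definition \ref{def:skft} together with the factorization through $\BN{-}_\bullet$ to guarantee that the handle maps respect the splitting. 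A clean way to finish would be to verify the splitting at the level of the formal bracket $\BN{\diagram}_\bullet$ -- where it is a standard fact about reduced Khovanov homology -- and then push it forward through the functor $F_\skft$.
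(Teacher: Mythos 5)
Your proposal has a genuine gap, and it sits exactly where the paper is most careful. For the invariance statement you propose to show $\chi_p \simeq \chi_q$ by sliding a dot from $p$ to $q$, invoking Lemma \ref{lem:dots} ``fiberwise over the cube of resolutions.'' This fails whenever $p$ and $q$ are separated by a crossing, which is the only interesting case. Lemma \ref{lem:dots} moves a dot along a component of a \emph{fixed} cobordism between resolved diagrams; but the maps $X_p$ and $X_q$ on $\skft(\diagram)$ are defined vertex-wise on the cube, and in any resolution where $p$ and $q$ lie on \emph{different} circles the vertex maps are $\Id \otimes X$ versus $X \otimes \Id$, which are not chain homotopic at that vertex (the vertex differential is zero, so homotopic means equal). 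Hence no fiberwise argument can work: the honest homotopy $X_p \simeq X_q$ in the formal bracket uses the ``backwards'' one-handle map $\handle[c'] \co \BN{\diagram_1} \to \BN{\diagram_0}$ along cube edges, and the paper states explicitly (in the Remark following Corollary \ref{cor:basepointInvariance}, and in the introduction when discussing Wehrli's proof) that this is \emph{more} structure than Proposition \ref{prop:mainTech} provides and is not known to exist for strong Khovanov-Floer theories. Your parenthetical claim that Lemma \ref{lem:dots} also handles $p$ and $q$ on different link components is backwards for the same reason: that lemma requires the two dots to lie on the same component of the cobordism.

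The missing idea is the map the paper calls $N$ (Shumakovitch's $\nu$): on each circle of a resolution it is a two-handle attachment followed by a zero-handle attachment (Figure \ref{fig:N}), extended by the Leibniz rule and then over the whole cube by induction on crossings using conicity, with commutation against the edge maps $\handle[k+1]$ verified by the 4Tu relation. Every ingredient here is a ``forward'' cobordism map, so $\nu = F_\skft(N)$ exists for any conic strong Khovanov-Floer theory. One then proves the cobordism-level identities $\chi_p N \chi_q N \chi_p = \chi_p$ and $\chi_q N \chi_p N \chi_q = \chi_q$, which push through $F_\skft$ to give $X_p \nu X_q \nu X_p \simeq X_p$ and $X_q \nu X_p \nu X_q \simeq X_q$. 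From these, $X_q\nu$ restricts to a map $\Im(X_p) \to \Im(X_q)$ with homotopy inverse $X_p\nu$ --- invariance is proved \emph{without} ever claiming $X_p \simeq X_q$. The same map repairs your splitting argument, which as written is not valid: a square-zero endomorphism does not by itself split a complex, and $\ker(\chi_p)/\Im(\chi_p)$ is a subquotient, not a direct summand. What is needed is a chain-level section of $X_p \co \skft(\diagram) \to \Im(X_p)$, and $\nu$ is that section; your instinct to verify the splitting in $\BN{-}_\bullet$ and push it through $F_\skft$ is sound, but the splitting data you push forward must itself be a cobordism-induced map in the domain of $F_\skft$, which is precisely why the construction of $N$ cannot be skipped.
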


\begin{proof}
  First we define a map 
  \[
    N \co \BN{\diagram} \to \BN{\diagram}.
  \]
  If $\diagram$ is a crossingless unknot, then define $N$ to be the composition
  of a two-handle attachment with a zero-handle attachment, see Figure \ref{fig:N}.  Extend $N$ by the Liebniz rule to crossingless unlinks; for a crossingless diagram with $r$ components, $N$ is the sum of $r$ cobordisms which consist of $r-1$ cylinders and one cobordism like the one in Figure \ref{fig:N}.  Now suppose that $N$ is defined for links with up to $k$ crossings and let $\diagram$ be a link diagram with $k+1$ crossings.  Conicity implies that 
  \[
    \BN{\diagram} = \cone(\handle[k+1] \co \BN{\diagram_0} \to \BN{\diagram_1}).
  \]
  \begin{figure}[t]
    \centering
    \includegraphics{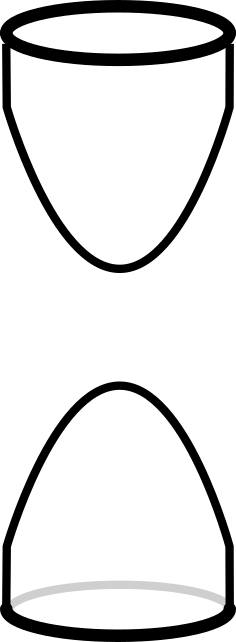} 
    \caption{The morphism $N$ applied to a crossingless unknot.}
    \label{fig:N}
  \end{figure} 
  $N$ is defined on $\BN{\diagram_0}$ and $\BN{\diagram_1}$ by hypothesis.  It defines a chain map on $\BN{\diagram}$ if it commutes with $\handle[k+1]$.  Figure \ref{fig:Nhandle} shows that it does.  The diagram describes four different cobordisms.  Label the circles 1 to 4 from left to right.  For $i, j$ distinct integers between $1$ and $4$, write $N_{ij}$ for the cobordism given by attaching a tube between circle $i$ and circle $j$.  The \emph{$4Tu$ relation} states that
  \[
    N_{12} + N_{23} + N_{34} + N_{41} = 0,
  \]
  taking the cobordisms to be morphisms in the linear cobordism category $\matcob$.  Observe that $N_{12} + N_{34}$ is a summand of $N \circ \handle[k+1]$, that $N_{41}$ is a summand of $\handle[k+1] \circ N$, and that $N_{23} = 0$.  The remaining summands cancel in pairs because the handle attachments belonging to $N$ have support disjoint from the support of $\handle[k+1]$.  Therefore $N$ and $\handle[k+1]$ commute.
  \begin{figure}[b]
    \centering
    \includegraphics[width=.5\linewidth]{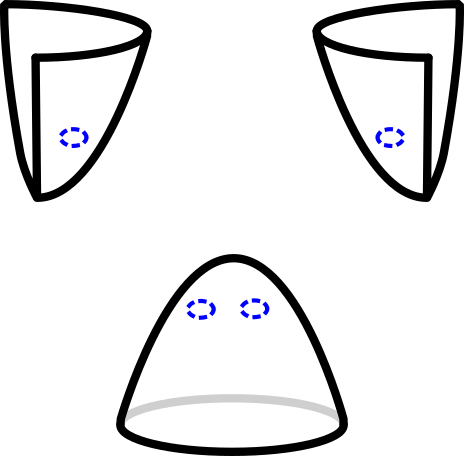}
    \caption{}
    \label{fig:Nhandle}
  \end{figure}

  Figure \ref{fig:bigPic} shows that
  \begin{align*}
    \chi_pN\chi_qN\chi_p &= \chi_p \\
    \chi_qN\chi_pN\chi_q &= \chi_q.
  \end{align*}
  \begin{figure}[h]
    \centering
    \def\svgwidth{.75\linewidth}
    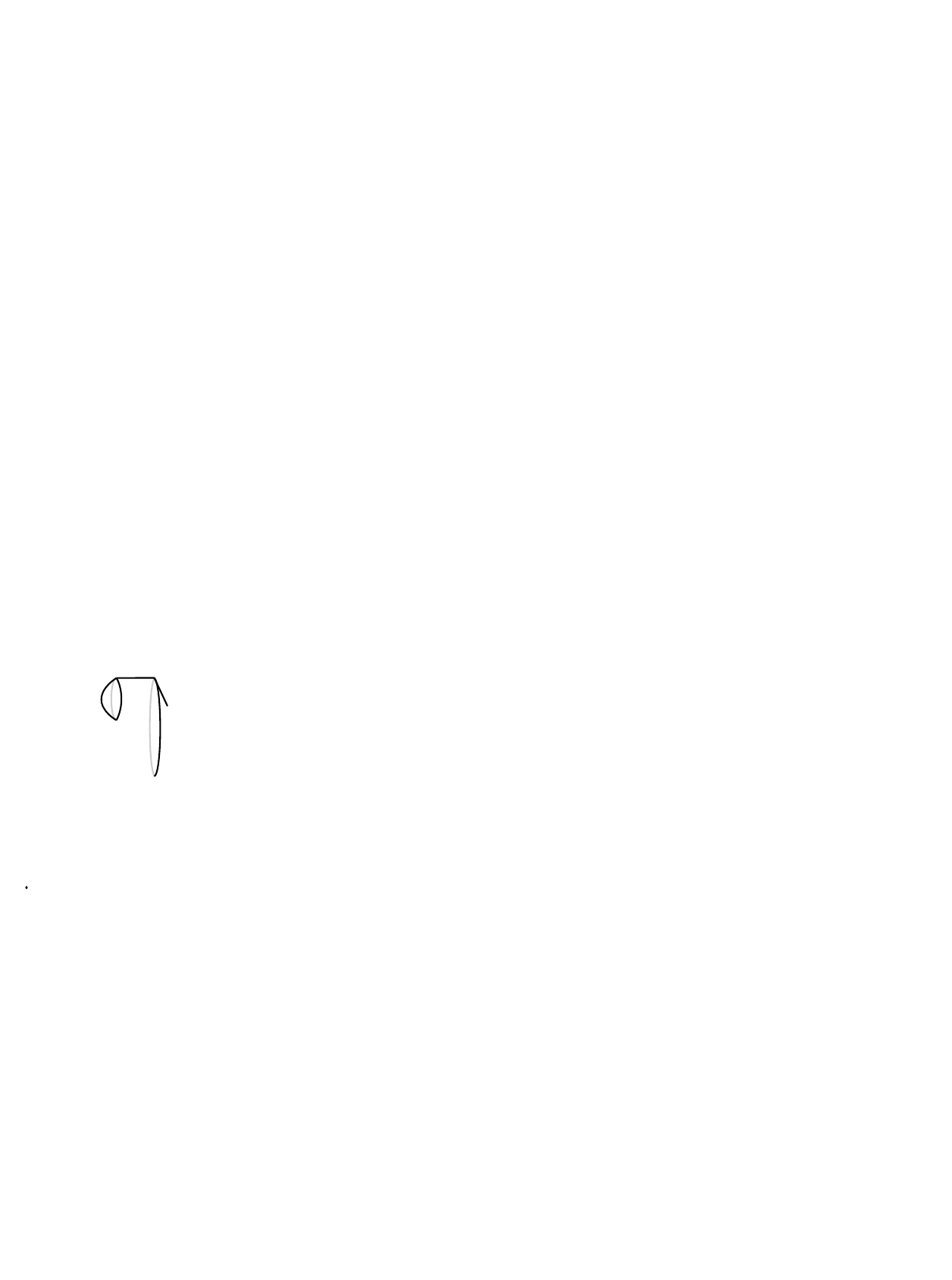  
    \caption{The calculation that $\chi_pN\chi_qN\chi_p = \chi_p$. It would be
      more precise to replace the circles labeled by $p$ and $q$ with arcs as in Figure \ref{fig:Nhandle} -- one-handle attachments between those circles may represent either merges or splits.}
    \label{fig:bigPic}
  \end{figure}
  
  Write $\nu = \skft(N)$ following~\cite{Shumakovitch2014} (or~\cite{Benheddi2017} for a very clear exposition).  It follows that
  \begin{align*}
    X_p \nu X_q \nu X_p &\simeq X_p \\
    X_q \nu X_p \nu X_q &\simeq X_q.
  \end{align*}
  $X_q \nu$ restricts to a map $\Im(X_p) \to \Im(X_q)$.  The equations above show that this restriction has chain homotopy inverse $X_p\nu$, and therefore $\rskft_p(\diagram) \simeq \rskft_q(\diagram)$.

  We have shown that the short exact sequence
  \[
    0 \longrightarrow \rskft_p(\diagram) \hookrightarrow \rskft(\diagram) \arrowunder{X_p} \rskft(\diagram) \to 0
  \]
  splits with section $\nu$.  The second statement follows.
\end{proof}

\begin{Cor}\label{cor:basepointInvariance}
  $\rSz(\diagram,p) \cong \rSz(\diagram,p')$ for any two basepoints $p$, $p'$ of
  $\diagram$.
\end{Cor}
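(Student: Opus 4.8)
The plan is to deduce the corollary directly from Proposition \ref{prop:invariance}, whose hypotheses \Szabo homology satisfies. First I would recall from \cite{Saltz2017a} that $\CSz$ is a conic, strong Khovanov-Floer theory; its underlying Frobenius algebra (Condition 5 of Definition \ref{def:skft}) is the one assigned to a crossingless unknot, and because $\CSz(\diagram)$ has the same underlying vector space as the Khovanov chain group this algebra is exactly $R = \bF[X]/(X^2)$. Hence Proposition \ref{prop:invariance} applies with $\skft = \CSz$.

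Given two basepoints $p$ and $p'$, the proposition produces a filtered chain homotopy equivalence $\rCSz(\diagram,p) \simeq \rCSz(\diagram,p')$ between the reduced complexes $\Im(X_p)$ and $\Im(X_{p'})$ of Definition \ref{def:reduced}. Since homology is invariant under chain homotopy equivalence, taking homology yields $\rSz(\diagram,p) \cong \rSz(\diagram,p')$, which is the claim. This already proves the corollary; it is essentially a restatement of Proposition \ref{prop:invariance} for the single theory $\CSz$.

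It remains to make the basepoint action explicit, as promised in Section \ref{sec:basepointActions}. A generator of $\CSz(\diagram)$ labels the circles of a complete resolution of $\diagram$ by elements of $R$, and $X_p$ is the composite of a zero-handle near $p$, multiplication of the new circle by $X$, and a one-handle merging it into the circle $C$ that contains $p$. I expect the main obstacle, though it is inessential to the corollary itself, to be checking that both handles can be taken ``passive'' in \Szabo's sense: supported in a disk meeting only $C$, so that no decorated configuration with active circles contributes. Granting this, the map reduces to Frobenius multiplication, sending the label $1$ on $C$ to $X$ and the label $X$ on $C$ to $0$ while fixing every other circle. In other words, despite the extra diagonal terms in the \Szabo differential, $X_p$ is literally the Khovanov basepoint action, and this local identification is the only genuine computation needed.
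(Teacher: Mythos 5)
Your first paragraph is fine as far as it goes: $\CSz$ is a conic, strong Khovanov-Floer theory over $\bF[X]/(X^2)$, so Proposition \ref{prop:invariance} gives a chain homotopy equivalence $\Im(X_p) \simeq \Im(X_{p'})$ and hence an isomorphism on homology. But the corollary, as an answer to Seed's conjecture, concerns \Szabo's reduced homology as \Szabo defines it: the homology of the subcomplex of $\CSz(\diagram)$ in which every circle containing $p$ is labeled $v_-$. So the step you call ``inessential to the corollary itself'' --- identifying $\Im(X_p)$ with \Szabo's reduced complex $\rCSz(\diagram,p)$ --- is precisely the content of the paper's proof; without it you have only shown basepoint-independence of a complex that has not yet been related to $\rSz$ at all.

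Worse, the identification you propose is false. You claim the handles can be taken ``passive'' so that no configuration with active circles contributes, making $X_p$ literally the Khovanov basepoint action. For \Szabo homology this cannot happen: conicity forces the one-handle attachment map to contain all of \Szabo's higher diagonal maps involving the surgery arc, not just the Frobenius merge (otherwise $\CSz(\diagram)$ would not be the cone on $\handle[c]$). Consequently $X_p$ decomposes as $X_{p,0} + X_{p,1} + \cdots + X_{p,c}$, where $X_{p,k}$ counts $k$-dimensional configurations of type E whose central circle contains $p$; the paper flags exactly this point in Section \ref{sec:basepointActions} (``This is not necessarily the Khovanov basepoint action''). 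Your description is correct only for the leading term $X_{p,0}$. Because of the higher terms, the two inclusions require genuine arguments: $\Im(X_p) \subset \rCSz(\diagram,p)$ follows from \Szabo's filtration rule, since every type E output labels the $p$-circle with $v_-$; and $\rCSz(\diagram,p) \subset \Im(X_p)$ follows by induction on the number of crossings --- for a reduced canonical generator $y$ one has $X_p(y^+) = y + z$ with $z$ supported in resolutions other than the all-zeroes one, and the inductive hypothesis supplies $z^+$ with $X_p(y^+ + z^+) = y$. Without this correction-by-induction step, the surjectivity of $X_p$ onto the reduced subcomplex is simply not established.
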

\begin{proof}
  This follows immediately from the previous proposition as long as reduced
  \Szabo homology as defined by \Szabo agrees with the reduced strong
  Khovanov-Floer theory given by \Szabo homology.  \Szabo defines
  $\rCSz(\diagram,p)$ as the subcomplex of $\CSz(\diagram)$ in which every
  circle containing $p$ is labeled with $v_-$.  So we will show that $\rCSz(\diagram,p) = \Im(X_p)$.
 
  The action of $X_p$ on $\CSz(\diagram)$ can be written
  \[
    X_p = X_{p,0} + X_{p,1} + \cdots + X_{p,c}
  \]
  where $C$ is the number of crossings in $\diagram$.  Here $X_{p,k}$ is
  homological degree $k$ part of $X_p$; informally, it ``counts''
  the $k$-dimensional configurations counted in $\partial$ with the addition
  of the little circle near $p$ and the decoration $\gamma_p$.  All of these
  configurations must be of type E, and the central circle must be the one which
  contains $p$.  The \emph{filtration rule} of \cite{Szabo2015} implies that if $x \in
  \Im(X_p)$, then $x$ is a sum of canonical generators in which the circle with
  $p$ is labeled by $v_-$.  So $\Im(X_p) \subset \rCSz(\diagram,p)$.

  The argument that $\rCSz(\diagram,p) \subset \Im(X_p)$ is standard for bounded
  filtered complexes: the statement clearly
  holds for diagrams with no crossings.  Now suppose that it holds for diagrams
  with up to $c-1$ crossings, and let $\diagram$ be a diagram with $C$
  crossings.  Write $I_0$ for the all-zeroes resolution of $\diagram$.  Then the
  subcomplex of $\rCSz(\diagram,p)$ generated by all resolutions except $I_0$ is
  a subcomplex of $\Im(X_p)$.  Let $y \in \rCSz(\diagram,p)$ be a canonical
  generator and let $y^+$ be the canonical generator of $\CSz(\diagram,p)$
  obtained by replacing the $v_-$ at $p$ with a $v_+$.  Then
  \[
    X_p(y^+) = y + z
  \]
  where $z$ is a sum of canonical generators in resolutions other than $I_0$.
  By hypothesis, there is an element $z^+ \in \CSz(\diagram,p)$ so that
  \[
    X_p(y^+ + z^+) = y.
  \]
  Therefore $\rCSz(\diagram,p) \subset \Im(X_p)$.
\end{proof}

\begin{Rem}
  Let $p$ and $q$ be basepoints on opposite sides of a crossing $c$ of $\diagram$.
  In the formal Khovanov bracket it holds that $X_p \simeq X_q$.  The
  chain homotopy is given by the ``backwards map'' $\handle[c'] \co \BN{\diagram_1} \to
  \BN{\diagram_0}$, the one-handle attachment dual to $\handle[c]$.  Proving
  this statement for strong Khovanov-Floer theories would allow us to adapt
  Wehrli's proof of mutation-invariance.
\end{Rem}

Seed's second conjecture follows from the fact that the Leray spectral sequence
for a direct sum of complexes is isomorphic to the direct sum of the individual
spectral sequences and from the fact that $\nu$ has quantum grading $2$.

\section{Connected sums and mutation}\label{sec:connectedSum}

\begin{Prop}\label{prop:connectedSums}
  Let $\diagram$ and $\diagram'$ be link diagrams and write $\diagram \# \diagram'$ for their connected sum.  Let $\skft$ be a conic, strong Khovanov-Floer theory over $\bF[X]/(X^2)$.  Then
  \[
    \rskft(\diagram) \otimes_{\bF} \rskft(\diagram') \simeq \rskft(\diagram \# \diagram'). 
  \]
  Fix points $p \in \diagram$ and $q \in \diagram'$ so that there is an embedded arc $\gamma$ from $p$ to $q$.  Write $\nu$ and $\nu'$ for the $\nu$ maps on $\skft(\diagram)$ and $\skft(\diagram')$.  One isomorphism is
  \[
    \handle[\gamma] \circ (\Id \otimes \nu').
  \]
  It is chain homotopic to the isomorphism
  \[
    \handle[\gamma] \circ (\nu \otimes \Id).
  \]
\end{Prop}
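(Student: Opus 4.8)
The plan is to realize both isomorphisms inside the formal Khovanov bracket, where every map in sight is a cobordism, and then transfer the result to an arbitrary conic strong Khovanov-Floer theory by Proposition~\ref{prop:mainTech}. Write $\Phi = \handle[\gamma] \circ (\Id \otimes \nu')$ and introduce the \emph{reverse} connect-sum map $\Psi = \handle[\bar\gamma] \co \skft(\diagram \# \diagram') \to \skft(\diagram) \otimes \skft(\diagram')$, the one-handle attachment dual to $\handle[\gamma]$, which splits the connect-sum circle back into the based circles of $\diagram$ and $\diagram'$. I would keep at hand the homotopy projection $e_q = X_q \nu'$ onto $\rskft_q(\diagram) = \Im(X_q)$, together with the two relations that drive everything: the consequence $X_p \nu + \nu X_p \simeq \Id$ of the dotted neck-cutting relation (Proposition~\ref{prop:dottedSFKT} and Figure~\ref{fig:neckcutting}), and the sphere relation $\nu^2 \simeq 0$. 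Writing $\nu = \iota\epsilon$ as a cap followed by a cup (see Figure~\ref{fig:N}), the goal is to show that $\Phi$ restricts to a homotopy equivalence $\rskft_p(\diagram) \otimes \rskft_q(\diagram') \xrightarrow{\ \sim\ } \rskft_p(\diagram \# \diagram')$ with homotopy inverse $(\Id \otimes e_q)\circ \Psi$.

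The heart of the argument is two composite identities, each a relation among cobordisms and therefore checkable on crossingless diagrams, where $\skft(U)$ is the Frobenius algebra $\bF[X]/(X^2)$, and then propagated by conicity. First, $\Phi \circ \Psi \simeq \Id$ on all of $\skft(\diagram \# \diagram')$: the composite is $\handle[\gamma]\circ(\Id \otimes \iota)\circ(\Id \otimes \epsilon)\circ \handle[\bar\gamma]$, which is the statement that capping one leg of the connect-sum pair of pants yields a cylinder, i.e.\ the unit and counit axioms. Second, $\Psi \circ \Phi \simeq \Id$ \emph{on the reduced subcomplex} $\rskft_p(\diagram)\otimes \rskft_q(\diagram')$: on the Frobenius algebra the composite $\handle[\bar\gamma]\circ\handle[\gamma]\circ(\Id\otimes\nu')$ sends $a \otimes b \mapsto \epsilon(b)\Delta(a)$, which restricts to the identity on $\Im(X)\otimes\Im(X) = \langle v_-\rangle\otimes\langle v_-\rangle$ since $\epsilon(v_-)=1$ and $\Delta(v_-)=v_-\otimes v_-$; propagating this cobordism identity through the conic cube via Proposition~\ref{prop:mainTech}, exactly as in the proof of Corollary~\ref{cor:basepointInvariance}, gives the relation on $\rskft_p\otimes\rskft_q$.

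With these in hand I would assemble the equivalence. That $\Phi$ lands in $\rskft_p(\diagram \# \diagram')$ follows from the naturality $X_p^{\#}\circ\handle[\gamma] \simeq \handle[\gamma]\circ(X_p \otimes \Id)$ (a handle slide, as in Lemma~\ref{lem:dots}) together with $X_p \simeq 0$ on $\Im(X_p)$. The subtlety --- and the step I expect to be the main obstacle --- is that $\Psi$ need \emph{not} carry $\rskft_p(\diagram \# \diagram')$ into $\rskft_p(\diagram)\otimes\rskft_q(\diagram')$ on the nose: seeing that the \emph{second} tensor factor is reduced would require $X_p^{\#} \simeq X_q^{\#}$ for the two basepoints on the single connect-sum circle, precisely the chain homotopy that the Remark after this proof flags as unavailable for general strong Khovanov-Floer theories. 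The same naturality applied to $p$ does give that $\Psi$ lands in $\Im(X_p)\otimes\skft(\diagram')$, so only the second factor is at issue, and I circumvent it by composing with $e_q$. One then checks that $(\Id\otimes e_q)\circ\Psi$ is the desired inverse: $\Phi\circ(\Id\otimes e_q)\circ\Psi \simeq \Phi\circ\Psi \simeq \Id$ because $\nu' e_q = \nu' X_q \nu' \simeq \nu'$ (here $\nu^2\simeq 0$ and $X_q\nu'+\nu'X_q\simeq\Id$ enter), while $(\Id\otimes e_q)\circ\Psi\circ\Phi \simeq \Id$ on the reduced product since $\Psi\Phi\simeq\Id$ there and $e_q\simeq\Id$ on $\Im(X_q)$. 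As reduced homology is basepoint-independent (Proposition~\ref{prop:invariance}), this is the asserted Künneth isomorphism, and the splitting $\skft \cong \rskft \oplus \rskft$ promotes it as needed.

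Finally, for the claim that $\handle[\gamma]\circ(\Id\otimes\nu') \simeq \handle[\gamma]\circ(\nu\otimes\Id)$ on the reduced product, I would observe that the two maps differ by sliding the $N$-bubble from the $\diagram'$-leg of the connect-sum tube to the $\diagram$-leg. On the full tensor product they are genuinely distinct, but their sum is supported off the reduced subcomplex --- in the Frobenius algebra it is $a\otimes b \mapsto \epsilon(a)\,\iota_{\#}(b) + \epsilon(b)\,\iota_{\#}(a)$, which vanishes on $v_-\otimes v_-$ in characteristic two --- and the required chain homotopy is the local neck-cutting homotopy for the connect-sum tube, transferred once more through Proposition~\ref{prop:mainTech}.
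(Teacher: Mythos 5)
Your proposal follows the same route as the paper's own proof, which is given almost entirely by Figures \ref{fig:connectedSum1}--\ref{fig:connectedSum3}: the equivalence is $\handle[\gamma]\circ(\Id\otimes\nu')$, its homotopy inverse is the dual handle attachment $\Psi = \handle[\bar\gamma]$ of Figure \ref{fig:connectedSum2}, and the homotopy between the two stated isomorphisms comes from the 4Tu/neck-cutting relations, all carried out in the dotted bracket and transferred through Proposition \ref{prop:mainTech}. Where you go beyond the paper is in making the homotopy algebra explicit, and in particular in addressing the codomain problem for $\Psi$: composing with the idempotent $e_q = X_q\nu'$ rather than asserting that $\Psi$ carries $\rskft_p(\diagram\#\diagram')$ into $\rskft_p(\diagram)\otimes\rskft_q(\diagram')$ is a genuine refinement, and your observation that doing better would require $X_p^{\#}\simeq X_q^{\#}$ (the unavailable homotopy flagged in the Remark after Corollary \ref{cor:basepointInvariance}) is exactly right.

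There is, however, one recurring imprecision you need to patch. By its inductive, Leibniz-rule definition in Proposition \ref{prop:invariance}, $\nu'$ is \emph{not} the single cap-and-birth cobordism $\iota\epsilon$ at the circle through $q$: on each resolution it is a sum of such cobordisms, one for every circle. All of your computations (the unit/counit argument for $\Phi\circ\Psi$, the formula $a\otimes b\mapsto \epsilon(b)\Delta(a)$ for $\Psi\circ\Phi$, and the bubble-sliding at the end) use only the principal term, so you must argue that the terms of $\nu'$ supported on the other circles die. On the reduced subcomplex they do, because each such term merges the two dotted circles at $p$ and $q$ and hence carries two dots on one component, which is zero over $\bF[X]/(X^2)$ --- this is precisely the caption of Figure \ref{fig:connectedSum1}. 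But your first identity $\Phi\circ\Psi\simeq\Id$ is asserted on \emph{all} of $\skft(\diagram\#\diagram')$, where no dots are available; there the extra terms are a split followed by a re-merge on the connect-sum circle, tensored with a cap-and-birth elsewhere, i.e.\ they contain a genus-one cylinder, and one must invoke that the handle operator is multiplication by $2X = 0$ in characteristic $2$. Both vanishing statements are true and each is a one-line consequence of relations already in play, so the gap is easily filled; but as written, the unit/counit axioms alone do not prove the identities you state.
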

\begin{proof}
  See Figures \ref{fig:connectedSum1}, \ref{fig:connectedSum2}, and
  \ref{fig:connectedSum3}.
  \begin{figure}[h]
    \Large
    \centering
    \def\svgwidth{.5\linewidth}
\begingroup%
  \makeatletter%
  \providecommand\color[2][]{%
    \errmessage{(Inkscape) Color is used for the text in Inkscape, but the package 'color.sty' is not loaded}%
    \renewcommand\color[2][]{}%
  }%
  \providecommand\transparent[1]{%
    \errmessage{(Inkscape) Transparency is used (non-zero) for the text in Inkscape, but the package 'transparent.sty' is not loaded}%
    \renewcommand\transparent[1]{}%
  }%
  \providecommand\rotatebox[2]{#2}%
  \newcommand*\fsize{\dimexpr\f@size pt\relax}%
  \newcommand*\lineheight[1]{\fontsize{\fsize}{#1\fsize}\selectfont}%
  \ifx\svgwidth\undefined%
    \setlength{\unitlength}{198.64279919bp}%
    \ifx\svgscale\undefined%
      \relax%
    \else%
      \setlength{\unitlength}{\unitlength * \real{\svgscale}}%
    \fi%
  \else%
    \setlength{\unitlength}{\svgwidth}%
  \fi%
  \global\let\svgwidth\undefined%
  \global\let\svgscale\undefined%
  \makeatother%
  \begin{picture}(1,1.10516265)%
    \lineheight{1}%
    \setlength\tabcolsep{0pt}%
    \put(0,0){\includegraphics[width=\unitlength,page=1]{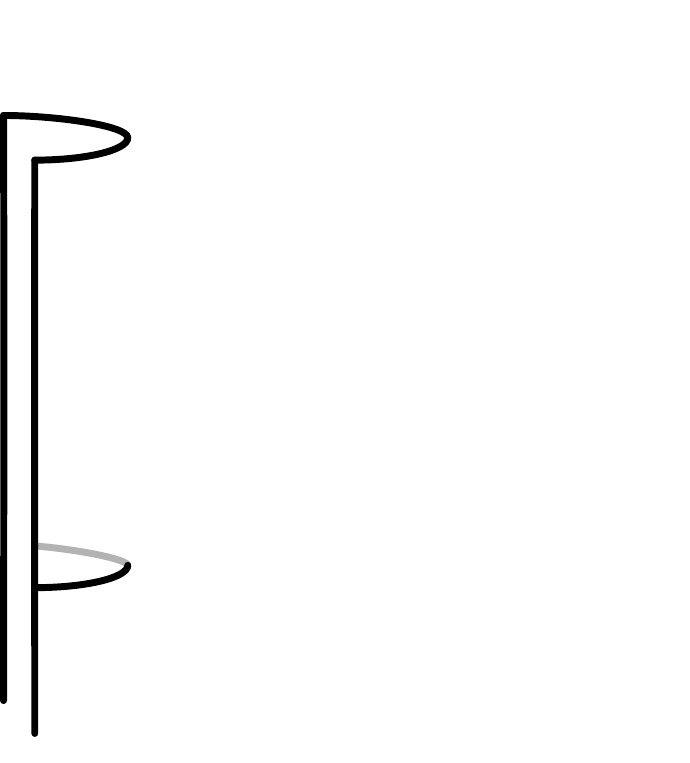}}%
    \put(0.05488965,0.97134936){\color[rgb]{0,0,0}\makebox(0,0)[lt]{\lineheight{1.25}\smash{\begin{tabular}[t]{l}$p$\end{tabular}}}}%
    \put(0.70376531,0.99041916){\color[rgb]{0,0,0}\makebox(0,0)[lt]{\lineheight{1.25}\smash{\begin{tabular}[t]{l}$q$\end{tabular}}}}%
    \put(0,0){\includegraphics[width=\unitlength,page=2]{connectedSum1.pdf}}%
  \end{picture}%
\endgroup%

    \caption{The interesting part of a cobordism which represents
      $\handle[\gamma] \circ (\Id \otimes \nu')$.  Of course $\nu$ is a sum of
      many cobordisms, but the rest are each equal to zero because they contain a
      surface with two dots.}
    \label{fig:connectedSum1}
  \end{figure}
  \begin{figure}[h]
    \Large
    \centering
    \def\svgwidth{.5\linewidth}
\begingroup%
  \makeatletter%
  \providecommand\color[2][]{%
    \errmessage{(Inkscape) Color is used for the text in Inkscape, but the package 'color.sty' is not loaded}%
    \renewcommand\color[2][]{}%
  }%
  \providecommand\transparent[1]{%
    \errmessage{(Inkscape) Transparency is used (non-zero) for the text in Inkscape, but the package 'transparent.sty' is not loaded}%
    \renewcommand\transparent[1]{}%
  }%
  \providecommand\rotatebox[2]{#2}%
  \newcommand*\fsize{\dimexpr\f@size pt\relax}%
  \newcommand*\lineheight[1]{\fontsize{\fsize}{#1\fsize}\selectfont}%
  \ifx\svgwidth\undefined%
    \setlength{\unitlength}{196.95474709bp}%
    \ifx\svgscale\undefined%
      \relax%
    \else%
      \setlength{\unitlength}{\unitlength * \real{\svgscale}}%
    \fi%
  \else%
    \setlength{\unitlength}{\svgwidth}%
  \fi%
  \global\let\svgwidth\undefined%
  \global\let\svgscale\undefined%
  \makeatother%
  \begin{picture}(1,0.51288233)%
    \lineheight{1}%
    \setlength\tabcolsep{0pt}%
    \put(0.07563981,0.3971554){\color[rgb]{0,0,0}\makebox(0,0)[lt]{\lineheight{1.25}\smash{\begin{tabular}[t]{l}$p$\end{tabular}}}}%
    \put(0.6896871,0.3971554){\color[rgb]{0,0,0}\makebox(0,0)[lt]{\lineheight{1.25}\smash{\begin{tabular}[t]{l}$q$\end{tabular}}}}%
    \put(0,0){\includegraphics[width=\unitlength,page=1]{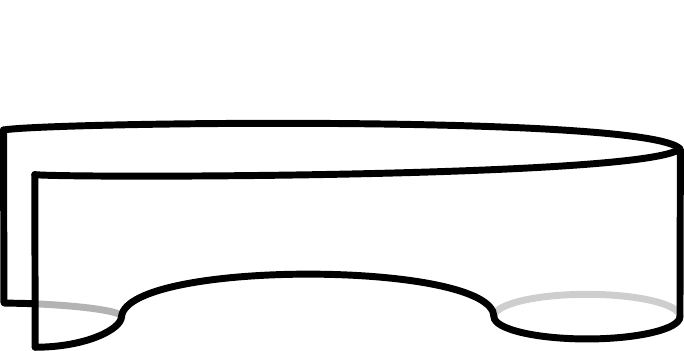}}%
  \end{picture}%
\endgroup%

    \caption{The inverse to $S$ -- the handle attachment dual to
      $\handle[\gamma]$.}
    \label{fig:connectedSum2}
  \end{figure}
  \begin{figure}[h]
    \Large
    \centering
    \def\svgwidth{.5\linewidth}
\begingroup%
  \makeatletter%
  \providecommand\color[2][]{%
    \errmessage{(Inkscape) Color is used for the text in Inkscape, but the package 'color.sty' is not loaded}%
    \renewcommand\color[2][]{}%
  }%
  \providecommand\transparent[1]{%
    \errmessage{(Inkscape) Transparency is used (non-zero) for the text in Inkscape, but the package 'transparent.sty' is not loaded}%
    \renewcommand\transparent[1]{}%
  }%
  \providecommand\rotatebox[2]{#2}%
  \newcommand*\fsize{\dimexpr\f@size pt\relax}%
  \newcommand*\lineheight[1]{\fontsize{\fsize}{#1\fsize}\selectfont}%
  \ifx\svgwidth\undefined%
    \setlength{\unitlength}{216.85701648bp}%
    \ifx\svgscale\undefined%
      \relax%
    \else%
      \setlength{\unitlength}{\unitlength * \real{\svgscale}}%
    \fi%
  \else%
    \setlength{\unitlength}{\svgwidth}%
  \fi%
  \global\let\svgwidth\undefined%
  \global\let\svgscale\undefined%
  \makeatother%
  \begin{picture}(1,1.13740575)%
    \lineheight{1}%
    \setlength\tabcolsep{0pt}%
    \put(0,0){\includegraphics[width=\unitlength,page=1]{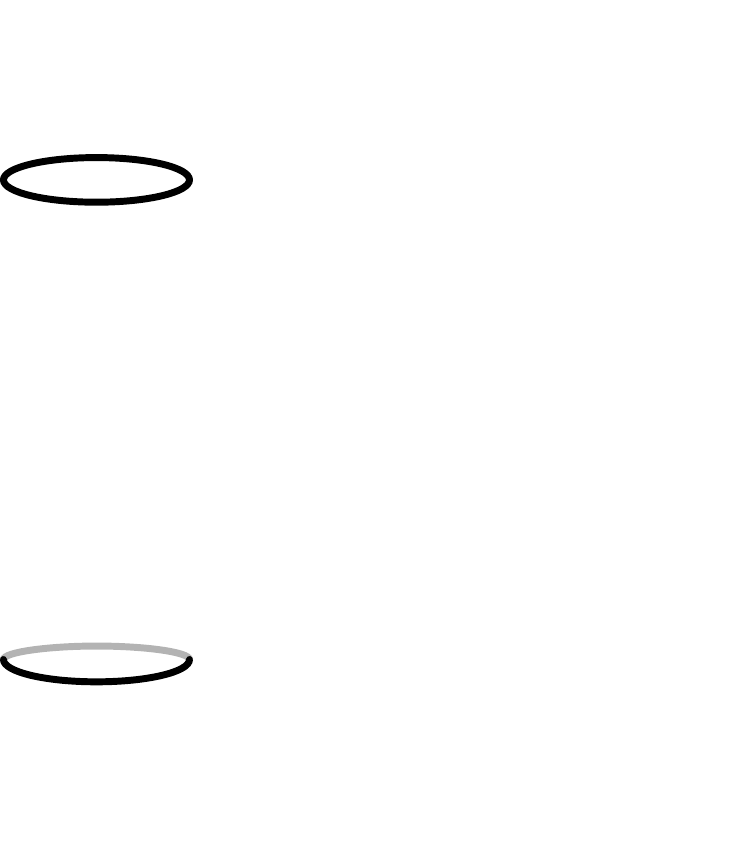}}%
    \put(0.00646283,1.03229977){\color[rgb]{0,0,0}\makebox(0,0)[lt]{\lineheight{1.25}\smash{\begin{tabular}[t]{l}$p$\end{tabular}}}}%
    \put(0.74757003,1.03229977){\color[rgb]{0,0,0}\makebox(0,0)[lt]{\lineheight{1.25}\smash{\begin{tabular}[t]{l}$q$\end{tabular}}}}%
    \put(0,0){\includegraphics[width=\unitlength,page=2]{connectedSum3.pdf}}%
  \end{picture}%
\endgroup%

    \caption{Apply the 4Tu relation to this picture to see that $\handle[\gamma]
      \circ (\Id \otimes \nu')$ is homotopic to $\handle[\gamma] \circ (\nu
      \otimes \Id)$.}
    \label{fig:connectedSum3}
  \end{figure}
\end{proof}

The following definition and result are due to Lambert-Cole in~\cite{2017arXiv170100880L}. 

\begin{Def}\label{def:ekft}
  An \emph{extended Khovanov-Floer theory} is a pair of a Khovanov-Floer theory and a reduced Khovanov-Floer theory $(\kft, \rkft)$ so that
  \begin{enumerate}
  \item $\kft(L) = \rkft(L \cup \circ, p)$.
  \item $\kft$ and $\rkft$ satisfy unoriented skein exact triangles.
  \item $\rkft(L,p) \cong \rkft(L,q)$.
  \end{enumerate}
\end{Def}

\begin{Thm*}[\cite{2017arXiv170100880L}]
  Extended Khovanov-Floer theories are invariant under Conway mutation.
\end{Thm*}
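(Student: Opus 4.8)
The plan is to prove invariance by induction on crossings, using the unoriented skein exact triangles guaranteed by Definition \ref{def:ekft}(2) to strip away crossings and the reduced structure of an extended theory to control the base case. Fix a Conway sphere $S$ meeting $L$ transversely in four points; it cuts $L$ into a tangle $T$ inside a ball $B$ and a complementary tangle $T'$, and the mutation $\rho$ is a $180^\circ$ rotation supported inside $B$, fixing $T'$ pointwise. The goal is $\kft(T \cup T') \cong \kft(\rho(T) \cup T')$.

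First I would induct on the number of crossings in the \emph{outer} tangle $T'$. Picking such a crossing, its unoriented skein triangle relates $\kft(L)$ to the two theories of its smoothings, both of which have fewer crossings in $T'$. Since $\rho$ is the identity in a neighborhood of this crossing, the rotation carries the entire triangle for $L$ to the triangle for $\rho(L)$ term by term, and the comparison isomorphisms supplied by the inductive hypothesis commute with every map in the triangle automatically. The five lemma then propagates the isomorphism upward. This reduces the problem to the case where $T'$ is crossingless, that is, where $L = \overline{T}$ is a closure of $T$ by one of the two crossingless four-ended tangles.

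For this base case I would induct a second time, now on the crossings of $T$. If $T$ is crossingless then $\overline{T}$ is an unlink and the statement is immediate. Otherwise I pick a crossing $c$ of $T$; its smoothings close up to links with fewer crossings, and $\rho$ carries them to the smoothings of $\rho(c)$, so the inductive hypothesis identifies the corresponding homologies. The new difficulty is that $\rho$ is no longer the identity near $c$, so matching the two skein triangles is not automatic. Here I would use the remaining structure of an extended theory: condition (3) lets me transport the distinguished basepoint along $T$ so that both triangles are formed with consistently placed marked data, and condition (1) together with the reduced K\"{u}nneth formula lets me pass between the reduced and unreduced theories and organize the complex near $S$ into factors that the rotation simply permutes. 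Because these factors are combined by a symmetric tensor product, the permutation is invisible on homology, and the isomorphisms on the smoothings can be chosen to intertwine the connecting maps.

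The main obstacle is exactly this last point. An exact triangle does not recover its first vertex from the other two, so the inductive isomorphisms on the two smoothings are useless until one verifies that they commute with the connecting homomorphisms of the two triangles; only then does the five lemma apply. Establishing this naturality under a $180^\circ$ rotation whose support meets the crossing is the crux of the argument, and it is precisely what the extended axioms are designed to provide --- the reduced K\"{u}nneth factorization furnishes the compatibility at the chain level rather than merely on homology, and basepoint invariance ensures the factorizations chosen on the two sides of the triangle agree. Working over $\bF = \bz/2\bz$ is a genuine convenience here, since there are no signs for the rotation to disturb.
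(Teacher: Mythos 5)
This theorem is not actually proved in the paper you were given: it is quoted from Lambert-Cole's paper \cite{2017arXiv170100880L} and used as a black box, the paper's own contribution (Theorem \ref{thm:main}) being the verification that conic strong Khovanov-Floer theories satisfy Definition \ref{def:ekft}. So there is no internal proof to compare against, and your proposal must stand on its own. It does not: the gap is exactly the point you flag as ``the main obstacle'' and then wave away. In your outer induction you assert that the inductive isomorphisms ``commute with every map in the triangle automatically'' because the rotation is the identity near the resolved crossing. This is false as stated, and it is the entire difficulty of mutation invariance: the isomorphisms supplied by the inductive hypothesis are not canonical (they are themselves assembled from earlier five-lemma steps and basepoint-moving identifications), and nothing forces them to intertwine the connecting maps of the two skein triangles. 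Worse, condition (2) of Definition \ref{def:ekft} supplies only exact triangles of homology groups, and exact triangles of homology groups admit no fill-in: even granting commutativity of the squares involving the two smoothings, you have no map $\kft(L) \to \kft(L^\mu)$ to which the five lemma could be applied. Producing such a map requires chain-level cone structure, and producing the homotopy that makes the relevant square commute \emph{is} the naturality problem. Since this problem is equally present whether the resolved crossing lies inside or outside the Conway ball, your reduction to a crossingless outer tangle is not legitimate as argued.

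The proposed repair in the inner induction is also unavailable. You invoke a ``reduced K\"{u}nneth formula'' to ``organize the complex near $S$ into factors that the rotation simply permutes,'' but the K\"{u}nneth property of an extended theory (Proposition \ref{prop:connectedSums} here, and Lambert-Cole's axiom) concerns connected sums, i.e.\ spheres meeting the link in \emph{two} points; it says nothing about Conway spheres, which meet the link in four. An extended Khovanov-Floer theory is not a tangle TQFT, and $\kft(T \cup T')$ does not factor into inner and outer tensor contributions. (Even where a tensor factorization exists, ``the permutation is invisible because the tensor product is symmetric'' is not an argument: the flip is a nontrivial isomorphism, and identifying it with the map induced by mutation is precisely what must be proved.) Note also where the extended axioms genuinely earn their keep in Lambert-Cole's argument: when the \emph{inner} tangle is crossingless, the Conway sphere compresses, the link becomes a connected sum, and mutation amounts to relocating the connect-sum points --- exactly the situation that conditions (1) and (3), together with the connected-sum K\"{u}nneth formula, are designed to handle. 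By stripping the outer tangle first, your scheme renders its base case vacuous and silently relocates all of the content of the theorem into the unproved naturality claims.
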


Now we can prove Theorem \ref{thm:main}. 

\begin{proof}[Proof of Theorem \ref{thm:main}]
In~\cite{Saltz2017a} we constructed a Khovanov-Floer theory for every strong Khovanov-Floer theory -- use the Leray spectral sequence given by the homological filtration on $\skft(\diagram)$.  For a strong Khovanov-Floer theory $\skft$ over $\bF[X]/(X^2)$, this Khovanov-Floer $\kft$ theory agrees with the one defined in~\cite{BaldwinHeddenLobb2015}.  

Every conic, strong Khovanov-Floer theory and its reduced version form an extended Khovanov-Floer theory under this construction.  The first requirement of Definition \ref{def:ekft} is straightforward.  The second is equivalent to conicity.  We have shown that $\rskft(\diagram,p) \simeq \rskft(\diagram,q)$.  It follows that $\rkft(\diagram,p) \cong \rkft(\diagram,q)$.
\end{proof}

 Our proof relies on Lambert-Cole's
theorem, but in fact Lambert-Cole's proof can be adapted to any strong
Khovanov-Floer theory.  In other words, the proof need not use any spectral
sequence techniques.

\bibliographystyle{plain}
\bibliography{bibliography}
\end{document}